\numberwithin{equation}{section}
\newtheorem{theo}{Theorem}[section]
\newtheorem{prop}[theo]{Proposition}
\newtheorem{lemma}[theo]{Lemma}
\newtheorem{defn}[theo]{Definition}
\newtheorem{rem}[theo]{Remark}
\newtheorem{cond}[theo]{Condition}
\newcommand{\var}{{\rm Var} \mspace{1mu}}
\renewcommand{\labelenumi}{\alph{enumi})}
\def\var{\mathop{\rm Var}\nolimits}
\begin{document}

\begin{frontmatter}

\title{Directed Random Graphs with  \\ Given Degree Distributions}
\runtitle{Directed Random Graphs}


\author{\fnms{Ningyuan} \snm{Chen}\ead[label=e1]{nc2462@columbia.edu}}
\address{Department of Industrial Engineering \\ and Operations Research  \\ Columbia University \\ New York, NY 10027 \\ \printead{e1}}
\affiliation{Columbia University}
\and \hspace{4pt}
\author{\fnms{Mariana} \snm{Olvera-Cravioto}\ead[label=e2]{molvera@ieor.columbia.edu}}
\address{Department of Industrial Engineering \\ and Operations Research \\ Columbia University \\ New York, NY 10027 \\ \printead{e2}}
\affiliation{Columbia University}

\runauthor{N. Chen  and M. Olvera-Cravioto}

\begin{abstract}
Given two distributions $F$ and $G$ on the nonnegative integers we propose an algorithm to construct in- and out-degree sequences from samples of i.i.d. observations from $F$ and $G$, respectively, that with high probability will be graphical, that is, from which a simple directed graph can be drawn. We then analyze a directed version of the configuration model and show that, provided that $F$ and $G$ have finite variance, the probability of obtaining a simple graph is bounded away from zero as the number of nodes grows. We show that conditional on the resulting graph being simple, the in- and out-degree distributions are (approximately) $F$ and $G$ for large size graphs. Moreover, when the degree distributions have only finite mean we show that the elimination of self-loops and multiple edges does not significantly change the degree distributions in the resulting simple graph. 
\end{abstract}

\begin{keyword}[class=AMS]
\kwd[Primary ]{05C80}
\kwd[; secondary ]{60C05}
\end{keyword}

\begin{keyword}
\kwd{Directed random graphs, simple graphs, configuration model, prescribed degree distributions}
\end{keyword}

\begin{date}
\date{\today}
\end{date}

\end{frontmatter}

\section{Introduction}

In order to study complex systems such as the World Wide Web (WWW) we propose a model for generating a simple directed random graph with prescribed degree distributions. The ability to match degree distributions to real graphs is perhaps the first characteristic one would desire from a model, and although several models that accomplish this for undirected graphs have been proposed in the recent literature \cite{McKay_Worm_90a, Chung_Lu_02a, Chung_Lu_02b, Britt_Deij_Mart_06}, not much has been done for the directed case.  In the WWW example that motivates this work, vertices represent webpages and the edges represent the links between them. Empirical studies (e.g., \cite{Kleinberg_etal_99, Broder_etal_00}) suggest that both the {\em in-degree} and {\em out-degree}, number of links pointing to a page and the number of outbound links of a page, respectively, follow a power-law distribution, a characteristic often referred to as the {\em scale-free} property. 

The model we propose in this paper is closely related to the work in \cite{Britt_Deij_Mart_06} for undirected graphs, where given a probability distribution $F$, the goal is to provide an algorithm to generate a simple random graph whose degree distribution is approximately $F$. Two of the models presented in \cite{Britt_Deij_Mart_06}, as well as the model in \cite{VDH_etal_05}, are in turn related to the well-known configuration model \cite{Wormald_78, Bollobas_80}, where vertices are given stubs or half-edges according to a degree sequence $\{d_i\}$ and these stubs are then randomly paired to form edges. To obtain a prescribed degree distribution, the degree sequence $\{d_i\}$ is chosen as i.i.d. random variables having distribution $F$. This method allows great flexibility in terms of the generality of $F$, which is very important in the applications we have in mind. The most general of the results presented here require only that the degree distributions have finite $(1+\epsilon)$th  moment, and are therefore applicable to a great variety of examples, including the WWW.

For a directed random graph there are two distributions that need to be chosen, the in-degree and out-degree distributions, denoted respectively $F = \{ f_k: k \geq 0\}$ and $G = \{g_k: k \geq 0\}$. The in-degree of a node corresponds to the number of edges pointing to it, while the out-degree is the number of edges pointing out. To follow the ideas from \cite{VDH_etal_05,Britt_Deij_Mart_06}, we propose to draw the in-degree and out-degree sequences as i.i.d. observations from distributions $F$ and $G$. Unlike the undirected case where the only main problem with this approach is that the sum of the degrees might not be even, which is necessary to draw an undirected graph, in the directed case the corresponding condition is that the sum of the in-degrees and the sum of the out-degrees be the same. Since the probability that two i.i.d. sequences will have the same sum, even if their means are equal, converges to zero as the number of nodes grows to infinity, the first part of the paper focuses on how to construct valid degree sequences without significantly destroying their i.i.d. properties. Once we have valid degree sequences the problem is how to obtain a simple graph, since the random pairing may produce self-loops and multiple edges in the same direction. This problem is addressed in two ways, the first of which consists in showing sufficient conditions under which the probability of generating a simple graph through random pairing is strictly positive, which in turn suggests repeating the pairing process until a simple graph is obtained. The second approach is to simply erase the self-loops and multiple edges of the resulting graph. In both cases, one must show that the  degree distributions in the final simple graph remain essentially unchanged. In particular, if we let $f_k^{(n)}$ be the probability that a randomly chosen node from a graph of size $n$ has in-degree $k$, and let $g_k^{(n)}$ be the corresponding probability for the out-degree, then we will show that,  
$$f_k^{(n)} \to f_k \qquad \text{and} \qquad  g_k^{(n)} \to g_k,$$
as $n \to \infty$. We also prove a similar result for the empirical distributions. 

The question of whether a given pair of in- and out-degree sequences $(\{m_i\}, \{d_i\})$ is graphical, i.e., from which it is possible to draw a simple directed graph, has been recently studied in \cite{Erd_Mik_Tor_10, LaMar_10}, where algorithms to realize such graphs have also been analyzed. Random directed graphs with arbitrary degree distributions have been studied in \cite{New_Str_Wat_01} via generating functions, which can be used to formalize concepts such as ``in-components" and ``out-components" as well as to estimate their average size.  Models of growing networks that can be calibrated to mimic the power-law behavior of the WWW have been analyzed using statistical physics techniques in \cite{Kra_Rod_Red_01, Kra_Red_02}. The approach followed in this paper focuses on one hand on the generation of in- and out-degree sequences that are close to being i.i.d. and that are graphical with high probability, and on the other hand on providing conditions under which a simple graph can be obtained through random pairing. The directed configuration model with (close to) i.i.d. degree sequences, although not a growing network model, has the advantage of being analytically tractable and easy to simulate.

The rest of the paper is organized as follows. In Section \ref{S.DegreeSequences} we introduce a model to construct in- and out-degree sequences that are very close to being two independent sequences of  i.i.d. random variables having distributions $F$ and $G$, respectively, but whose sums are the same; in the same spirit as the results in \cite{Arr_Ligg_05} we also show that the suggested method produces with high probability a graphical pair of degree sequences.  In Subsection \ref{SS.Repeat} we prove sufficient conditions under which the probability that the directed configuration model will produce a simple graph will be bounded away from zero, and show that conditional on the resulting graph being simple, the degree sequences have asymptotically the correct distributions. In Subsection \ref{SS.Erase} we show that under very mild conditions, the process of simply erasing self-loops and multiple edges results in a graph whose degree distributions are still asymptotically $F$ and $G$.

\section{Graphs and degree sequences}
\label{S.DegreeSequences}

As mentioned in the introduction, the goal of this paper is to provide an algorithm for generating a random directed graph with $n$ nodes with the property that its in-degrees and out-degrees have some prespecified distributions $F$ and $G$, respectively. Moreover, we would like the resulting graph to be {\em simple}, that is, it should not contain self-loops or multiple edges in the same direction. The two models that we propose are based on the so-called configuration or pairing model, which produces a random undirected graph from a degree sequence $\{d_1, d_2, \dots, d_n\}$.  In \cite{VDH_etal_05, Britt_Deij_Mart_06} the prescribed degree distribution is obtained by drawing the degree sequence $\{d_i\}$ as i.i.d. random variables from that distribution. More details about the configuration model can be found in Section \ref{S.ConfigurationModel}.

Following the same idea of using a sequence of  i.i.d. random variables to generate the degree sequence of an undirected graph, the natural extension to the directed case would be to draw two i.i.d. sequences from given distributions $F$ and $G$. We note that in the undirected setting the two main problems with this approach are: 1) that the sum of the degrees may be odd, in which case it is impossible to draw a graph, and 2) that there may not exist a simple graph having the prescribed degree sequence. The first problem is easily fixed by either sampling the i.i.d. sequence until its sum is even (which will happen with probability 1/2 asymptotically), or simply adding one to the last random number in the sequence. The second problem, although related to the verification of graphicality criteria (e.g., the Erd\"os-Gallai criterion \cite{Erd_Gall_60}), turns out to be negligible as the number of nodes goes to infinity, as the work in \cite{Arr_Ligg_05} shows. For directed graphs a graphicality criterion also exists, and the second problem turns out to be negligible for large graphs just as in the undirected case. Nonetheless, the equivalent of the first problem is now that the potential in-degree and out-degree sequences must have the same sum, which is considerably harder to fix. Before proceeding with the formulation of our proposed algorithm we give some basic definitions which will be used throughout the paper.

\begin{defn}
We denote by $\vec{G}(V, \vec{E})$ a directed graph on $n$ nodes or vertices, $V = \{v_1, v_2, \dots, v_n\}$, connected via the set of directed edges $\vec{E}$. 
\end{defn}

\begin{defn}
We say that  $\vec{G}(V, \vec{E})$ is {\em simple} if any pair of nodes are connected by at most one edge in each direction, and if there are no edges in between a node and itself.  
\end{defn}

\begin{defn}
The in-degree $m_i$, respectively, out-degree $d_i$, of node $v_i \in V$ is the total number of edges from other nodes to $v_i$, respectively, from $v_i$ to other nodes.  The pair of sequences $({\bf m}, {\bf d}) = (\{ m_1, m_2, \dots, m_n \}, \linebreak \{ d_1, d_2, \dots, d_n\} )$ of nonnegative integers is called a {\em bi-degree-sequence} if $m_i$ and $d_i$ correspond to the in-degree and out-degree, respectively, of node $v_i$. 
\end{defn}

\begin{defn} \label{D.bi-degree-sequence}
A bi-degree-sequence $({\bf m}, {\bf d})$ is said to be {\em graphical} if there exists a simple directed graph $\vec{G}(V, \vec{E})$ on the set of nodes $V$ such that the in-degree and out-degree sequences together form $({\bf m}, {\bf d})$. In this case we say that $\vec{G}$ realizes the bi-degree-sequence. 
\end{defn}

In view of these definitions our goal is to generate the sequences $\{ m_i \}$ and $\{ d_i \}$ from i.i.d. samples of given distributions $F = \{ f_k: k \geq 0\}$ and $G = \{ g_k: k \geq 0\}$, respectively. Both $F$ and $G$ are assumed to be probability distributions with support on the nonnegative integers with a finite common mean $\mu$. Note that although the Strong Law of Large Numbers (SLLN) guarantees that if we simply sample i.i.d. random variables $\{\gamma_1, \dots, \gamma_n\}$ from $F$ and, independently, i.i.d. random variables $\{\xi_1, \dots, \xi_n\}$ from $G$, then
$$P\left(\lim_{n \to \infty} \frac{1}{n} \sum_{i=1}^n \gamma_i = \lim_{n\to\infty} \frac{1}{n} \sum_{i=1}^n \xi_i \right) = 1,$$
it is also true that in general 
$$\lim_{n \to \infty} P\left(   \sum_{i=1}^n \gamma_i - \sum_{i=1}^n \xi_i  = 0  \right) = 0.$$
One potential idea to fix the problem is to sample one of the two sequences, say the in-degrees, as i.i.d. observations $\{\gamma_1, \dots, \gamma_n\}$ from $F$ and then sample the second sequence from the conditional distribution $G$ given that its sum is $\Gamma_n = \sum_{i=1}^n \gamma_i$. This approach has the major drawback that this conditional distribution may be ill-behaved, in the sense that the probability of the conditioning event, the sum being equal to $\Gamma_n$, converges to zero in most cases. It follows that we need a different mechanism to sample the degree sequences. The precise algorithm we propose is described below; we focus on first sampling two independent i.i.d. sequences and then add in- or out-degrees as needed to match their sums.

The following definition will be needed throughout the rest of the paper.

\begin{defn}
We say that a function $L(\cdot)$ is slowly varying at infinity if $\lim_{x \to \infty} L(tx)/L(x) = 1$ for all fixed $t > 0$. A distribution function $F$ is said to be regularly varying with index $\alpha > 0$, $F \in \mathcal{R}_{-\alpha}$, if $\overline{F}(x) = 1 - F(x) = x^{-\alpha} L(x)$ with $L(\cdot)$ slowly varying. 
\end{defn}

We will also use the notation $\Rightarrow$ to denote convergence in distribution, $\stackrel{P}{\longrightarrow}$ to denote convergence in probability, and $\mathbb{N} = \{1, 2, 3, \dots\}$ to refer to the positive integers.

\subsection{The Algorithm} \label{SS.Algorithm}

We assume that the target degree distributions $F$ and $G$ have support on the nonnegative integers and have common mean $\mu > 0$. Moreover, suppose that there exist slowly varying functions $L_F(\cdot)$ and $L_G(\cdot)$ such that
\begin{equation} \label{eq:powerlaw-majorant}
\overline{F}(x) = \sum_{k > x} f_k \leq x^{-\alpha} L_F(x) \qquad \text{and} \qquad \overline{G}(x) = \sum_{k > x} g_k \leq x^{-\beta} L_G(x),
\end{equation}
for all $x \geq 0$, where $\alpha, \beta > 1$. 

We refer the reader to \cite{BiGoTe_1987} for all the properties of slowly varying functions that will be used in the proofs. However, we do point out here that the tail conditions in \eqref{eq:powerlaw-majorant} ensure that $F$ has finite moments of order $s$ for all $0 < s < \alpha$, and $G$ has finite moments of order $s$ for all $0 < s < \beta$.  The constant 
$$\kappa = \min\{ 1 - \alpha^{-1}, 1- \beta^{-1}, 1/2\},$$
will play an important role throughout the paper. The algorithm is given below. 

\begin{enumerate}
\renewcommand{\labelenumi}{\arabic{enumi}.}
\item Fix $0 < \delta_0 < \kappa$. 
\item Sample an i.i.d. sequence $\{\gamma_1, \dots, \gamma_n\}$ from distribution $F$; let $\Gamma_n=\sum_{i=1}^n\eta_i$.

\item Sample an i.i.d. sequence $\{ \xi_1, \dots, \xi_n\}$ from distribution $G$; let $\Xi_n=\sum_{i=1}^n\xi_i$.

\item Define $\Delta_n= \Gamma_n- \Xi_n$. If $|\Delta_n| \leq n^{1-\kappa + \delta_0}$ proceed to step 5; otherwise repeat from step 2. 
 
\item Choose randomly $|\Delta_n|$ nodes $\{i_1, i_2, \dots, i_{|\Delta_n|}\}$ without remplacement and let
$$N_i = \eta_i + \tau_i, \qquad D_i = \xi_i + \chi_i, \qquad i = 1, 2, \dots, n,$$
where
\begin{align*}
\chi_i &= \begin{cases}
			1 & \text{if $\Delta_n \geq 0$ and $i\in \{ i_1,i_2,\dots,i_{\Delta_n} \} $,}\\
			0 & \text{otherwise,}
	\end{cases} \qquad \text{and} \\
\tau_i &= \begin{cases}
			1 & \text{if $\Delta_n < 0$ and $i\in \{ i_1,i_2,\dots,i_{|\Delta_n|} \} $,}\\
			0 & \text{otherwise.}
	\end{cases}
\end{align*}
\end{enumerate}

\begin{rem}
(i) This algorithm constructs a bi-degree-sequence $({\bf N}, {\bf D})$ having the property that $L_n = \sum_{i=1}^n N_i = \sum_{i=1}^n D_i$. 
(ii) Note that we have used the capital letters $N_i$ and $D_i$ to denote the in-degree and out-degree, respectively, of node $i$, as opposed to using the notation $m_i$ and $d_i$ from Definition \ref{D.bi-degree-sequence}; we do this to emphasize the randomness of the bi-degree-sequence itself. 
(iii) Clearly, neither $\{N_1, \dots, N_n\}$ nor $\{D_1, \dots, D_n\}$ are i.i.d. sequences, nor are they independent of each other, but we will show in the next section that asymptotically as $n$ grows to infinity they have the same joint distribution as $(\{ \gamma_i\}, \{\xi_i\})$. (iv) We will also show that the condition in step 4 has probability converging to one. (v) Note that we always choose to add degrees, rather than fixing one sequence and always adjust the other one, to avoid having problems with nodes with in- or out-degree zero. 
\end{rem}

\subsection{Asymptotic behavior of the degree sequence}

We now provide some results about the asymptotic behavior of the bi-degree-sequence obtained from the algorithm we propose. The first thing we need to prove is that the algorithm will always end in finite time, and the only step where we need to be careful is in Step 4, since it may not be obvious that we can always draw two independent i.i.d. sequences satisfying $|\Delta_n| \leq n^{1-\kappa+\delta_0}$ in a reasonable amount of time. The first lemma we give establishes that this is indeed the case by showing  that the probability of satisfying condition $|\Delta_n| \leq n^{1-\kappa+\delta_0}$ converges to one as the size of the graph grows. All the proofs in this section can be found in Subsection \ref{SS.DegreeSequencesProofs}. 

\begin{lemma} \label{L.DifferenceLimit}
Define $\mathcal{D}_n = \{ |\Delta_n| \leq n^{1-\kappa+\delta_0} \}$, then
$$\lim_{n \to \infty} P(\mathcal{D}_n ) = 1.$$
\end{lemma}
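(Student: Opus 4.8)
The plan is to treat $\Delta_n$ as a centered random walk and control its tail by a single fractional-moment (Markov) inequality, taking care of the possibly heavy tails. First I would set $\zeta_i = \gamma_i - \xi_i$; since $\{\gamma_i\}$ and $\{\xi_i\}$ are independent i.i.d. sequences with common mean $\mu$, the $\zeta_i$ are i.i.d. with $\expect[\zeta_1]=0$, and $\Delta_n = \sum_{i=1}^n \zeta_i$. The statement is equivalent to $P(|\Delta_n| > n^{1-\kappa+\delta_0}) \to 0$. The moment input comes from \eqref{eq:powerlaw-majorant}: writing $\alpha' = \min\{\alpha,\beta\}$, the tail bounds give $\expect[\gamma_1^p], \expect[\xi_1^p] < \infty$ for every $p < \alpha'$, and since $|\zeta_1| \le \gamma_1 + \xi_1$ and $p \ge 1$, also $\expect|\zeta_1|^p \le 2^{p-1}(\expect[\gamma_1^p] + \expect[\xi_1^p]) < \infty$ for all $p < \alpha'$.

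The crux is choosing the right moment order $p$. Note that $1-\kappa = \max\{\alpha^{-1}, \beta^{-1}, 1/2\} = 1/\min\{\alpha',2\}$, so $1/(1-\kappa) = \min\{\alpha',2\}$. Because $0 < \delta_0 < \kappa$, the number $1/(1-\kappa+\delta_0)$ lies strictly between $1$ and $\min\{\alpha',2\}$: the bound $1/(1-\kappa+\delta_0) > 1$ is exactly the hypothesis $\delta_0 < \kappa$, while $1/(1-\kappa+\delta_0) < 1/(1-\kappa) = \min\{\alpha',2\}$ uses $\delta_0 > 0$. Hence I can fix an exponent $p$ with
$$\frac{1}{1-\kappa+\delta_0} < p < \min\{\alpha', 2\},$$
so that simultaneously $1 < p \le 2$, $\expect|\zeta_1|^p < \infty$, and $p(1-\kappa+\delta_0) > 1$.

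With such a $p$ the conclusion follows quickly. Since the $\zeta_i$ are independent, mean zero, and $1 \le p \le 2$, the von Bahr--Esseen inequality yields $\expect|\Delta_n|^p \le 2\sum_{i=1}^n \expect|\zeta_i|^p = 2n\,\expect|\zeta_1|^p$. Markov's inequality then gives
$$P\bigl(|\Delta_n| > n^{1-\kappa+\delta_0}\bigr) \le \frac{\expect|\Delta_n|^p}{n^{p(1-\kappa+\delta_0)}} \le 2\,\expect|\zeta_1|^p \, n^{\,1 - p(1-\kappa+\delta_0)} \longrightarrow 0,$$
because $1 - p(1-\kappa+\delta_0) < 0$ by the choice of $p$. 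Therefore $P(\mathcal{D}_n) = 1 - P(|\Delta_n| > n^{1-\kappa+\delta_0}) \to 1$.

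The one genuine obstacle is the heavy-tailed regime $\alpha' \le 2$, where $\Delta_n$ may fail to have finite variance, so a plain Chebyshev bound is unavailable; the remedy is precisely to run Markov with a fractional moment $p < \alpha'$, and the role of the assumption $\delta_0 < \kappa$ is to guarantee that $p$ can be taken above $1$ (so that the von Bahr--Esseen inequality applies and the exponent still beats the threshold). If one prefers to avoid citing von Bahr--Esseen, the same estimate can be obtained by a direct truncation of $\zeta_i$ at level $n^{1-\kappa}$: bound the event that some jump exceeds the truncation level by the union bound $n\,P(|\zeta_1| > n^{1-\kappa})$ via \eqref{eq:powerlaw-majorant}, and control the centered truncated sum through its (now finite) variance, using the properties of slowly varying functions from \cite{BiGoTe_1987}; this gives the same conclusion at the cost of more bookkeeping.
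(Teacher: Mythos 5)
Your proof is correct, but it takes a genuinely different route from the paper's. The paper's own argument splits into two cases: when $E[Z_1^2]<\infty$ (your $\zeta_1$ is its $Z_1$) it applies Chebyshev's inequality directly, and when $E[Z_1^2]=\infty$ it truncates the $Z_i$ at level $t_n=n^{\theta+\epsilon}$ with $\theta=\max\{\alpha^{-1},\beta^{-1}\}$, bounds the event that some $|Z_i|$ exceeds $t_n$ by a union bound, controls the truncation-induced mean shift $E[\tilde Z_1]$ via Karamata-type estimates for slowly varying functions (Proposition 1.5.10 in \cite{BiGoTe_1987}), and finishes with Chebyshev applied to the truncated, recentered sum. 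You replace all of this with a single unified argument: choose $p$ with $1/(1-\kappa+\delta_0)<p<\min\{\alpha,\beta,2\}$ --- your identity $1/(1-\kappa)=\min\{\alpha,\beta,2\}$ together with $0<\delta_0<\kappa$ guarantees this is a nonempty subinterval of $(1,2)$ --- then apply the von Bahr--Esseen bound $E|\Delta_n|^p\le 2n\,E|Z_1|^p$ and Markov's inequality; the exponent bookkeeping is right, since $p<\min\{\alpha,\beta\}$ gives $E|Z_1|^p<\infty$ by the remark following \eqref{eq:powerlaw-majorant}, and $p(1-\kappa+\delta_0)>1$ gives the decay. What your approach buys is the elimination of the case analysis, the truncation bookkeeping, and the slowly-varying-function machinery, and it covers both the finite- and infinite-variance regimes in one stroke; what it costs is reliance on von Bahr--Esseen, a standard but nontrivial moment inequality the paper never cites, whereas the paper's proof is self-contained modulo Karamata theory, which it cites anyway for other purposes. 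Your closing remark is also accurate: the truncation alternative you sketch is essentially the paper's infinite-variance argument, with the one wrinkle your sketch elides but the paper must (and does) address, namely that truncation shifts the mean, so one has to verify $n|E[\tilde Z_1]|=o\bigl(n^{1-\kappa+\delta_0}\bigr)$ before Chebyshev can be applied to the recentered truncated sum.
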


Since the sums of the in-degrees and out-degrees are the same, we can always draw a graph, but this is not enough to guarantee that we can draw a simple graph.  In other words, we need to determine with what probability will the the bi-degree-sequence $({\bf N}, {\bf D})$ be graphical, and to do this we first need a appropriate criterion, e.g., a directed version of the Erd\"os-Gallai criterion for undirected graphs. The following result (Corollary 1 on p. 110 in \cite{Berge_1976}) gives necessary and sufficient conditions for a bi-degree-sequence to be graphical; the original statement is for more general $p$-graphs, where up to $p$ parallel edges in the same direction are allowed. The notation $|A|$ denotes the cardinality of set $A$. 

\begin{theo} \label{thm:graphical-condition} 
Given a set of $n$ vertices $V= \{v_{1}, \dots, v_{n}\}$, having bi-degree-sequence $({\bf m}, {\bf d}) = \linebreak (\{m_1, \dots, m_n\}, \{d_1, \dots, d_n\})$, a necessary and sufficient condition for $({\bf m}, {\bf d})$ to be graphical is 
\begin{enumerate}
\item $\displaystyle \sum_{i=1}^n m_i = \sum_{i=1}^n d_i$, and
\item $\displaystyle \sum_{i=1}^{n} \min\{d_{i}, |A-\{v_{i}\}|\} \geq \sum_{v_{i}\in A} m_{i} $ for any $A \subseteq V$.
\end{enumerate}
\end{theo}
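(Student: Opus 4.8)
The plan is to recognize this as a bipartite realization problem with a forbidden diagonal and to prove the two directions separately, with sufficiency resting on the max-flow--min-cut theorem. The key reformulation is that a simple directed graph on $V$ with in-degrees $\mathbf{m}$ and out-degrees $\mathbf{d}$ is in bijection with a $0$-$1$ matrix $B = (b_{ij})_{1 \le i,j \le n}$ satisfying $b_{ii} = 0$, row sums $\sum_j b_{ij} = d_i$, and column sums $\sum_i b_{ij} = m_j$, where $b_{ij} = 1$ encodes a directed edge from $v_i$ to $v_j$. Equivalently, I would consider the flow network with source $s$, sink $t$, left nodes $u_1, \dots, u_n$ and right nodes $w_1, \dots, w_n$, arcs $s \to u_i$ of capacity $d_i$, arcs $w_j \to t$ of capacity $m_j$, and unit-capacity arcs $u_i \to w_j$ for every pair $i \ne j$ (the forbidden diagonal encodes the absence of self-loops). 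A realizing graph exists iff this network admits a flow saturating every source and sink arc; since all capacities are integers, the integrality theorem for flows guarantees it suffices to find such a flow over the reals.

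Necessity is the easy direction and I would argue it by a direct counting bound. Condition 1 is immediate since both sides equal $|\vec{E}|$. For Condition 2, fix $A \subseteq V$ and count the edges whose head lies in $A$: this number equals $\sum_{v_i \in A} m_i$. On the other hand, a node $v_k$ contributes to this count at most $\min\{d_k, |A \setminus \{v_k\}|\}$ edges, since it has only $d_k$ out-edges and can send at most one edge to each element of $A$ other than itself. Summing over $k$ yields Condition 2.

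For sufficiency I would invoke max-flow--min-cut: the required flow exists iff every $s$-$t$ cut has capacity at least $L := \sum_i d_i$. Parametrizing a cut by the set $I$ of left nodes placed on the sink side and writing $A$ for the set of right nodes placed on the sink side, a computation of the cut capacity reduces the inequality ``capacity $\ge L$'', after cancelling $L$ via Condition 1, to $\sum_{i \in I} d_i + \#\{(i,j): i \notin I,\ v_j \in A,\ i \ne j\} \ge \sum_{v_j \in A} m_j$. For fixed $A$ the left-hand side is minimized over $I$ by assigning each index $i$ the smaller of $d_i$ and $|A \setminus \{v_i\}|$, so the minimum over all $I$ equals $\sum_{i} \min\{d_i, |A \setminus \{v_i\}|\}$. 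Hence every cut has capacity at least $L$ precisely when Condition 2 holds for every $A$, and consequently an integral saturating flow --- equivalently a realizing simple directed graph --- exists.

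The main obstacle is the bookkeeping in the min-cut computation: one must set up the correspondence between cuts and the pair $(I, A)$ correctly, keep careful track of the forbidden diagonal $i = j$ in the middle arcs, and verify that optimizing over the placement of the left nodes produces exactly the term-by-term minimum $\min\{d_i, |A \setminus \{v_i\}|\}$. Getting the indices and the direction of the inequalities right is the only delicate part; once the cut capacity is written correctly, Condition 2 emerges from the term-wise minimization, and integrality of the maximum flow converts the feasible flow into the desired $0$-$1$ matrix with zero diagonal.
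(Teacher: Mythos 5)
Your proof is correct, but note what the paper actually does here: it does not prove this statement at all. The theorem is quoted verbatim as Corollary~1 on p.~110 of Berge's \emph{Graphs and Hypergraphs} \cite{Berge_1976}, stated there for general $p$-graphs (up to $p$ parallel edges in the same direction allowed), and the paper simply specializes it to $p=1$. So your contribution is a self-contained proof where the paper relies on a citation. Your route --- encoding a simple digraph as a $0$--$1$ matrix with zero diagonal, building the bipartite flow network with unit-capacity middle arcs $u_i \to w_j$ for $i \neq j$, and checking that the min cut, after term-wise optimization over the placement of the left nodes, is exactly $\sum_i \min\{d_i, |A \setminus \{v_i\}|\}$ --- is the classical flow-theoretic argument (essentially Gale's feasibility theorem with a forbidden diagonal), and it is in fact the same family of argument Berge uses for the general $p$-graph version, where the middle arcs carry capacity $p$ instead of $1$. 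All the steps you flag as delicate check out: the cut capacity is $\sum_{i \in I} d_i + \#\{(i,j) : i \notin I,\ v_j \in A,\ i \neq j\} + \sum_{v_j \notin A} m_j$, Condition~1 lets you cancel $\sum_j m_j$ against the flow value $L$, the minimization over $I$ decouples across indices, and integrality of the max flow converts feasibility into the desired matrix. The necessity direction by direct edge counting is also exactly right. One could add, for completeness, the one-line observation that the min cut is also at most $L$ (take the cut $\{s\}$ versus everything else), so the max flow equals $L$ and saturates every source and sink arc; you use this implicitly when asserting the saturating flow exists.
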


We now state a result that shows that for large $n$, the bi-degree-sequence $({\bf N}, {\bf D})$ constructed in Subsection \ref{SS.Algorithm} is with high probability graphical. Related results for undirected graphs can be found in \cite{Arr_Ligg_05}, which includes the case when the degree distribution has infinite mean. 

\begin{theo}\label{thm:always-graphical}
For the bi-degree-sequence $({\bf N}, {\bf D})$ constructed in Section \ref{SS.Algorithm} we have 
$$\lim_{n \to \infty} P\left( ({\bf N}, {\bf D}) \mbox{ is graphical}\right) = 1.$$
\end{theo}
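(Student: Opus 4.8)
The plan is to invoke the graphicality criterion of Theorem~\ref{thm:graphical-condition}. Its first condition, $\sum_i N_i = \sum_i D_i$, holds deterministically by the construction of the algorithm, so the entire task reduces to showing that its second condition,
\[
\sum_{i=1}^n \min\{D_i, |A - \{v_i\}|\} \ge \sum_{v_i \in A} N_i \qquad \text{for all } A \subseteq V,
\]
holds with probability tending to one. The first move is to eliminate the exponentially many subsets $A$. Since $|A - \{v_i\}|$ equals $k-1$ when $v_i \in A$ and $k$ otherwise, where $k = |A|$, using $\min\{D_i, k\} - \min\{D_i, k-1\} = \mathbf{1}\{D_i \ge k\}$ shows the second condition is equivalent to
\[
\sum_{i=1}^n \min\{D_i, k\} \ge \sum_{v_i \in A}\left( N_i + \mathbf{1}\{D_i \ge k\}\right) \qquad \text{for all } A \text{ with } |A| = k .
\]
Because the right-hand side is a linear functional of the indicator of $A$, the worst case over $|A|=k$ is attained by taking $A$ to consist of the $k$ nodes with the largest values of $N_i + \mathbf{1}\{D_i \ge k\}$. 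Thus graphicality reduces to the $n$ inequalities $T_k \ge \Sigma_k$, $k = 1,\dots,n$, where $T_k = \sum_{i=1}^n \min\{D_i,k\}$ and $\Sigma_k$ is the sum of the $k$ largest values of $N_i + \mathbf{1}\{D_i\ge k\}$.

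Next I would work on the event $\mathcal{D}_n$ of Lemma~\ref{L.DifferenceLimit}, which has probability tending to one and on which the algorithm perturbs at most $|\Delta_n| \le n^{1-\kappa+\delta_0} = o(n)$ of the degrees, each by at most one unit; in particular $\gamma_i \le N_i \le \gamma_i + 1$ and $\xi_i \le D_i \le \xi_i + 1$ for every $i$. This lets me replace the dependent sequences $({\bf N},{\bf D})$ by the i.i.d. sequences $\{\gamma_i\}$ and $\{\xi_i\}$ up to additive errors of order $o(n)$, so that $T_k$ concentrates around $n\,\nu_k$ with $\nu_k = \mathbb{E}[\min\{\xi_1,k\}]$ (by Chebyshev's inequality, using that $\min\{\xi_1,k\}$ is bounded), while $\Sigma_k$ is controlled by the sum of the upper order statistics $\sum_{i=1}^k \gamma_{(i)}$ of the in-degree sample, whose asymptotics are governed by the regular-variation bound $\overline{F}(x) \le x^{-\alpha}L_F(x)$.

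I would then bound $P(\text{graphicality fails}) \le P(\mathcal{D}_n^{\Complem}) + \sum_{k=1}^n P(T_k < \Sigma_k)$ and split the sum into ranges of $k$. For small $k$ the quantity $\Sigma_k$ is dominated by a few extreme order statistics and is $o(n)$ by regular variation, whereas $T_k \ge n\nu_1 = \Theta(n)$, so each term is negligible; for $k$ a fixed fraction of $n$ one compares $T_k \approx n\mu$ against $\Sigma_k \approx n\,\mathbb{E}[\gamma_1;\,\gamma_1 > q]$ with $q$ the appropriate quantile, and the strict inequality $\mathbb{E}[\gamma_1;\,\gamma_1>q] < \mu$ yields a $\Theta(n)$ margin. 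The delicate endpoints are the intermediate range, where $\sum_{i=1}^k\gamma_{(i)} \sim C\, k^{1-1/\alpha} n^{1/\alpha}$ must be weighed against $n\nu_k$, and the regime $k \to n$, where both sides are $\Theta(n)$ with a vanishing margin; there I expect to exploit the identity $T_k = L_n - \sum_i (D_i - k)^+$ together with the balance $\sum_i N_i = \sum_i D_i$ to show that the caps and indicator terms remove only a negligible amount. The main obstacle is precisely this moderate-to-large-$k$ regime: the crude pointwise bounds $N_i \le \gamma_i + 1$ lose an additive $\Theta(n)$ and become useless once $T_k$ and $\Sigma_k$ are comparable, so sharp estimates for the upper order statistics of regularly varying samples, combined with the exact degree balance, are needed to make every term of the union bound summable to zero.
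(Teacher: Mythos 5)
Your opening moves are correct: you invoke Theorem~\ref{thm:graphical-condition}, note the equal-sums condition holds by construction, and your rearrangement of the second condition into the $n$ inequalities $T_k \ge \Sigma_k$ (via $\min\{D_i,k\}-\min\{D_i,k-1\}=\mathbf{1}\{D_i\ge k\}$) is valid and in fact sharper than the bound the paper works with; the small-$k$ analysis is also on the right track. But the proposal is not a proof: you explicitly leave the moderate-to-large-$k$ regime unresolved, calling it ``the main obstacle'' and predicting a need for sharp asymptotics of upper order statistics. That gap is genuine, and the anticipated difficulty is illusory --- it dissolves under one observation that you gesture at (the identity $T_k = L_n - \sum_i (D_i-k)^+$ plus degree balance) but never combine with a truncation of the maximum out-degree. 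On the event $\{\max_{1\le i\le n} D_i \le n^{(1+\epsilon)/\beta}\}$, which has probability $1-o(1)$ by the union bound and the tail condition on $G$ (this is where the hypothesis $\overline{G}(x)\le x^{-\beta}L_G(x)$ enters), every $k > n^{(1+\epsilon)/\beta}$ gives $\sum_i (D_i-k)^+ = 0$ and $\#\{i: D_i \ge k\}=0$, hence $T_k = L_n = \sum_i N_i \ge \Sigma_k$ holds \emph{deterministically} by the balance $\sum_i N_i = \sum_i D_i$. No order-statistics estimates are needed in that regime at all; note also that your hoped-for sharp asymptotics $\Sigma_k \sim C k^{1-1/\alpha}n^{1/\alpha}$ are not even available under the paper's hypotheses, which impose only a one-sided tail majorant on $F$, not exact regular variation.

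Once the large-$k$ regime is disposed of this way, the remaining range $k \le n^{(1+\epsilon)/\beta}$ does not require matching $\Sigma_k$ against $T_k$ for each $k$: since $T_k \ge T_1 = \sum_i \min\{D_i,1\} \approx \overline{G}(0)\,n = \Theta(n)$, it suffices to show that the sum of the top $\lfloor n^{(1+\epsilon)/\beta}\rfloor$ values of the $N_i$ is $o(n)$ with high probability, which the paper gets from Markov's inequality combined with Lemma~\ref{L.SumsOrderStats}, i.e.\ $\sum_{i=n-k+1}^n E[X^{(i)}] \le \int_0^\infty \min\{n\overline{V}(x),k\}\,dx$; this covers all set sizes below the threshold uniformly and replaces your term-by-term union bound $\sum_k P(T_k<\Sigma_k)$. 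Two further defects in your sketch are repaired by the same observation: first, your fixed-fraction argument relies on the strict inequality $E[\gamma_1;\gamma_1>q]<\mu$, which fails when $F$ has an atom at $0$ of mass at least $1-c$ (then $q=0$ and there is no margin), whereas balance plus the maximum-degree truncation handles that case; second, since the algorithm conditions on $\mathcal{D}_n$, all probabilities for $({\bf N},{\bf D})$ are conditional on $\mathcal{D}_n$, and one must divide by $P(\mathcal{D}_n)$ and invoke Lemma~\ref{L.DifferenceLimit} rather than simply add $P(\mathcal{D}_n^{\Complem})$ to a bound computed for the unconditioned i.i.d.\ sequences.
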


The second property of  $({\bf N}, {\bf D})$ that we want to show is that despite the fact that the sequences $\{N_i\}$ and $\{D_i\}$ are no longer independent nor individually i.i.d., they are still asymptotically so as the number of vertices $n$ goes to infinity.  The intuition behind this result is that the number of degrees that need to be added to one of  the i.i.d. sequences $\{ \gamma_i\}$ or $\{\xi_i\}$ to match their sum is small compared to $n$, and therefore the sequences $\{N_i\}$ and $\{D_i\}$ are {\em almost} i.i.d. and independent of each other. This feature makes the bi-degree-sequence $({\bf N}, {\bf D})$ we propose an approximate equivalent of the i.i.d. degree sequence considered in \cite{Arr_Ligg_05, VDH_etal_05, Britt_Deij_Mart_06} for undirected graphs.

\begin{theo} \label{T.WeakConvergence}
The bi-degree-sequence $({\bf N}, {\bf D})$ constructed in Subsection \ref{SS.Algorithm} satisfies that for any fixed $r, s \in \mathbb{N}$, 
$$( N_{i_1}, \dots, N_{i_r}, D_{j_1}, \dots, D_{j_s} ) \Rightarrow (\gamma_1, \dots, \gamma_r, \xi_1, \dots, \xi_s)$$
as $n \to \infty$, where $\{\gamma_i\}$ and $\{\xi_i\}$ are independent sequences of i.i.d. random variables having distributions $F$ and $G$, respectively. 
\end{theo}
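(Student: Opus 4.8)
The plan is to show that any fixed finite collection of nodes is, with high probability, left untouched by the augmentation in Step 5, so that the corresponding coordinates of $({\bf N}, {\bf D})$ coincide with the original i.i.d. samples; the only remaining issue is that the samples actually used by the algorithm are conditioned on the event $\mathcal{D}_n$, and this conditioning is rendered asymptotically harmless by Lemma \ref{L.DifferenceLimit}. Throughout I fix distinct indices $i_1, \dots, i_r$ and $j_1, \dots, j_s$, write $\mathbf{Y}_n = (\gamma_{i_1}, \dots, \gamma_{i_r}, \xi_{j_1}, \dots, \xi_{j_s})$ for the vector of raw samples and $\mathbf{X}_n = (N_{i_1}, \dots, N_{i_r}, D_{j_1}, \dots, D_{j_s})$ for the augmented vector, and let $E_n$ be the event that none of these indices is selected in Step 5, so that on $E_n$ all of $\tau_{i_1}, \dots, \tau_{i_r}, \chi_{j_1}, \dots, \chi_{j_s}$ vanish and hence $\mathbf{X}_n = \mathbf{Y}_n$.

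First I would bound $\Pr(E_n^\Complem)$. Since the $|\Delta_n|$ augmented nodes are chosen uniformly without replacement and independently of the sample values, each fixed node is selected with probability exactly $|\Delta_n|/n$; a union bound over the at most $r+s$ relevant nodes gives
$$\Pr(E_n^\Complem \mid \gamma_1, \dots, \gamma_n, \xi_1, \dots, \xi_n) \leq (r+s) \frac{|\Delta_n|}{n}.$$
On the event $\mathcal{D}_n$ we have $|\Delta_n| \leq n^{1-\kappa+\delta_0}$, so this bound is at most $(r+s) n^{-\kappa+\delta_0}$, which tends to $0$ because $\delta_0 < \kappa$. Since $\mathcal{D}_n$ is measurable with respect to the sample values, integrating over $\mathcal{D}_n$ and dividing by $\Pr(\mathcal{D}_n)$ yields $\Pr(E_n^\Complem \mid \mathcal{D}_n) \leq (r+s) n^{-\kappa+\delta_0}/\Pr(\mathcal{D}_n) \to 0$, where I use $\Pr(\mathcal{D}_n) \to 1$ from Lemma \ref{L.DifferenceLimit}.

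Next I would account for the conditioning. The bi-degree-sequence produced by the algorithm has the law of the augmented sequence conditioned on $\mathcal{D}_n$, so for any bounded function $f$ on $\{0,1,2,\dots\}^{r+s}$ I would split
$$\expect[f(\mathbf{X}_n) \mid \mathcal{D}_n] = \expect[f(\mathbf{X}_n) \ind_{E_n} \mid \mathcal{D}_n] + \expect[f(\mathbf{X}_n) \ind_{E_n^\Complem} \mid \mathcal{D}_n].$$
The second term is bounded by $\|f\|_\infty \Pr(E_n^\Complem \mid \mathcal{D}_n) \to 0$, and on $E_n$ one may replace $\mathbf{X}_n$ by $\mathbf{Y}_n$, after which the same estimate lets me drop the indicator $\ind_{E_n}$ at a cost of $\|f\|_\infty \Pr(E_n^\Complem \mid \mathcal{D}_n)$; thus $\expect[f(\mathbf{X}_n) \mid \mathcal{D}_n]$ and $\expect[f(\mathbf{Y}_n) \mid \mathcal{D}_n]$ differ by $o(1)$. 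Finally, since $\Pr(\mathcal{D}_n) \to 1$, conditioning is asymptotically negligible, as $|\expect[f(\mathbf{Y}_n)\ind_{\mathcal{D}_n}] - \expect[f(\mathbf{Y}_n)]| \leq \|f\|_\infty \Pr(\mathcal{D}_n^\Complem) \to 0$ gives $\expect[f(\mathbf{Y}_n) \mid \mathcal{D}_n] \to \expect[f(\mathbf{Y}_n)]$. Because the chosen indices are distinct and $\{\gamma_i\}$, $\{\xi_i\}$ are independent i.i.d. sequences, $\mathbf{Y}_n \eqd (\gamma_1, \dots, \gamma_r, \xi_1, \dots, \xi_s)$ exactly, which identifies the limit and proves the asserted weak convergence.

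I expect the main obstacle to be organizational rather than analytic: the one genuine subtlety is cleanly disentangling the two distinct sources of approximation, namely the random augmentation in Step 5 and the conditioning on $\mathcal{D}_n$ that is built into the algorithm, and checking that neither survives in the limit. The combinatorial bound on the selection probability is elementary, and the two quantitative inputs that make everything collapse, $|\Delta_n|/n \to 0$ on $\mathcal{D}_n$ together with $\Pr(\mathcal{D}_n) \to 1$, are supplied precisely by the choice $\delta_0 < \kappa$ and by Lemma \ref{L.DifferenceLimit}.
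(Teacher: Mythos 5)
Your proof is correct and follows essentially the same route as the paper's: both test against bounded functions, bound the probability that any of the $r+s$ fixed nodes receives an extra stub in Step 5 by $(r+s)|\Delta_n|/n \leq (r+s)n^{-\kappa+\delta_0}$ on $\mathcal{D}_n$, and then remove the conditioning on $\mathcal{D}_n$ using $P(\mathcal{D}_n) \to 1$ from Lemma \ref{L.DifferenceLimit}. The only cosmetic difference is that the paper phrases the first step via the count $T = \sum_t \tau_{i_t} + \sum_t \chi_{j_t}$ and computes $P(\chi_{j_t}=1,\mathcal{D}_n)$ explicitly by conditioning on $\sigma(\gamma_1,\dots,\gamma_n,\xi_1,\dots,\xi_n)$, whereas you bound the (slightly larger) event that some fixed index is selected at all; the estimates are identical.
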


To end this section, we give a result that establishes regularity conditions of the bi-degree-sequence $({\bf N}, {\bf D})$ which will be important in the sequel.

\begin{prop} \label{P.MainConditions}
The bi-degree-sequence $({\bf N}, {\bf D})$ constructed in Subsection \ref{SS.Algorithm} satisfies
$$\frac{1}{n} \sum_{k=1}^n 1(N_k = i, D_k = j) \stackrel{P}{\longrightarrow} f_i g_j,  \quad \text{for all } i,j \in \mathbb{N} \cup \{0\},$$
$$\frac{1}{n} \sum_{i=1}^n N_i \stackrel{P}{\longrightarrow} E[\gamma_1], \qquad \frac{1}{n} \sum_{i=1}^n D_i \stackrel{P}{\longrightarrow} E[\xi_1], \qquad \text{and} \qquad \frac{1}{n} \sum_{i=1}^n N_iD_i \stackrel{P}{\longrightarrow} E[\gamma_1 \xi_1],$$
as $n \to \infty$, and provided $E[\gamma_1^2 + \xi_1^2] < \infty$, 
$$\frac{1}{n} \sum_{i=1}^n N_i^2 \stackrel{P}{\longrightarrow} E[\gamma_1^2], \qquad \text{and} \qquad \frac{1}{n} \sum_{i=1}^n D_i^2 \stackrel{P}{\longrightarrow} E[\xi_1^2],$$
as $n \to \infty$. 
\end{prop}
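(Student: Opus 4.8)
The plan is to write $N_i = \gamma_i + \tau_i$ and $D_i = \xi_i + \chi_i$ with $\tau_i, \chi_i \in \{0,1\}$, and to treat each empirical average as its ``i.i.d.\ part'', built from the unperturbed sequences $\{\gamma_i\}, \{\xi_i\}$, plus an error coming from the at most $|\Delta_n|$ perturbed nodes. Two facts will be used throughout: (a) exactly $|\Delta_n|$ nodes are perturbed, so $\sum_{i=1}^n (\tau_i + \chi_i) = |\Delta_n|$ and $\tau_i \chi_i = 0$ for every $i$; and (b) by Lemma \ref{L.DifferenceLimit}, $P(\mathcal{D}_n) \to 1$, and on $\mathcal{D}_n$ we have $|\Delta_n|/n \leq n^{-\kappa+\delta_0} \to 0$ since $\delta_0 < \kappa$. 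Because every assertion is a convergence in probability, it suffices in each case to control the error on $\mathcal{D}_n$ and then invoke $P(\mathcal{D}_n^{\Complem}) \to 0$.

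For the joint empirical distribution, I would first note that $\{(\gamma_k, \xi_k)\}$ is an i.i.d.\ sequence with $P(\gamma_k = i, \xi_k = j) = f_i g_j$ by independence, so the SLLN gives $\frac{1}{n}\sum_{k=1}^n 1(\gamma_k = i, \xi_k = j) \to f_i g_j$ almost surely. Since a perturbed node changes the indicator $1(N_k = i, D_k = j)$ by at most one, the two empirical averages differ by at most $|\Delta_n|/n$, which vanishes on $\mathcal{D}_n$; this yields the first display. The linear sums are handled the same way: $\frac{1}{n}\sum_{i=1}^n N_i = \frac{1}{n}\sum_{i=1}^n \gamma_i + \frac{1}{n}\sum_{i=1}^n \tau_i$, where the first term converges to $E[\gamma_1]$ by the SLLN and the second is bounded by $|\Delta_n|/n$; the out-degree sum is symmetric.

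The products and second moments are where the real work lies. Expanding $N_iD_i = \gamma_i \xi_i + \gamma_i \chi_i + \tau_i \xi_i$ (the term $\tau_i\chi_i$ vanishes) and $N_i^2 = \gamma_i^2 + 2\gamma_i\tau_i + \tau_i$ (using $\tau_i^2 = \tau_i$), the leading terms $\frac{1}{n}\sum_i\gamma_i\xi_i \to E[\gamma_1\xi_1]$ and $\frac{1}{n}\sum_i\gamma_i^2 \to E[\gamma_1^2]$ again follow from the SLLN, the latter under the stated second-moment assumption, while $\frac{1}{n}\sum_i\tau_i \leq |\Delta_n|/n$. The crux is the cross terms such as $\frac{1}{n}\sum_{i} \gamma_i \chi_i$. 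A naive bound of the form $(\max_i \gamma_i)\,|\Delta_n|/n$ is too lossy under the heavy tails permitted by \eqref{eq:powerlaw-majorant}, since $\max_i\gamma_i$ can be of order $n^{1/\alpha}$ and $n^{1/\alpha-\kappa+\delta_0}$ need not vanish when $\alpha \le 2$. Instead I would exploit that the set of perturbed nodes is a uniformly random subset of size $|\Delta_n|$ chosen \emph{independently} of the values $\{\gamma_i\},\{\xi_i\}$. Conditioning on $\mathcal{F} = \sigma(\{\gamma_i\},\{\xi_i\})$, each node is selected with probability $|\Delta_n|/n$, so
$$E\left[\frac{1}{n}\sum_{i=1}^n \gamma_i \chi_i \,\Big|\, \mathcal{F}\right] = \frac{|\Delta_n|}{n}\cdot\frac{\Gamma_n}{n}\,1(\Delta_n \ge 0).$$
On $\mathcal{D}_n$ the right-hand side is at most $n^{-\kappa+\delta_0}\,\Gamma_n/n$; since $E[\Gamma_n/n] = \mu$, a conditional Markov inequality followed by taking expectations over the $\mathcal{F}$-measurable event $\mathcal{D}_n$ gives $P(\{\frac{1}{n}\sum_i\gamma_i\chi_i > \epsilon\}\cap\mathcal{D}_n) \le \epsilon^{-1} n^{-\kappa+\delta_0}\mu \to 0$, so the cross term is $o_P(1)$. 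The terms $\frac{1}{n}\sum_i\tau_i\xi_i$ and $\frac{1}{n}\sum_i\gamma_i\tau_i$ are treated identically, and assembling the pieces yields all the stated limits. The main obstacle is exactly this cross-term estimate: the point is to avoid any uniform worst-case bound on the perturbed degrees and instead use the independence of the node selection from the degree values, so that the selected $\gamma_i$ average like $\mu$ rather than like the extreme order statistics.
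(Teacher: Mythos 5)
Your proposal is correct and follows essentially the same route as the paper: decompose $N_i = \gamma_i + \tau_i$, $D_i = \xi_i + \chi_i$, use the LLN for the i.i.d.\ parts, bound the bounded-summand errors by $|\Delta_n|/n \leq n^{-\kappa+\delta_0}$ on $\mathcal{D}_n$, and handle the cross terms by conditioning on $\mathcal{F}_n = \sigma(\gamma_1,\dots,\gamma_n,\xi_1,\dots,\xi_n)$ so that $E[\chi_i \mid \mathcal{F}_n] = \frac{|\Delta_n|}{n}1(\Delta_n \geq 0)$ and the selected nodes' values average like $\mu$ rather than like extreme order statistics --- which is exactly the paper's key step. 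The only (immaterial) difference is bookkeeping: the paper applies Markov's inequality with an intermediate threshold $n^{-\kappa+\delta}$, $\delta_0 < \delta < \kappa$, and reduces to node $1$ by exchangeability, whereas you apply conditional Markov at a fixed $\epsilon$ to the whole sum.
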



\section{The configuration model} \label{S.ConfigurationModel}

In the previous section we introduced a model for the generation of a bi-degree-sequence $({\bf N}, {\bf D})$ that is close to being a pair of independent sequences of i.i.d. random variables, but yet has the property of being graphical with probability close to one as the size of the graph goes to infinity. We now turn our attention to the problem of obtaining a realization of such sequence, in particular, of drawing a simple graph having $({\bf N}, {\bf D})$ as its bi-degree-sequence. 

The approach that we follow is a directed version of the {\em configuration model}. The configuration, or {\em pairing model}, was introduced in \cite{Bollobas_80} and \cite{Wormald_78}, although earlier related ideas based on symmetric matrices with $\{0, 1\}$ entries go back to the early 70's; see \cite{Wormald_99, Bollobas_2001} for a survey of the history as well as additional references. The configuration model is based on the following idea: given a degree sequence ${\bf d} = \{d_1, \dots, d_n\}$, to each node $v_i$, $1 \leq i \leq n$, assign $d_i$ stubs or half-edges, and then pair half-edges to form an edge in the graph by randomly selecting with equal probability from the remaining set of unpaired half-edges. This procedure results in a multigraph on $n$ nodes having ${\bf d}$ as its degree sequence, where the term multigraph refers to the possibility of self-loops and multiple edges. Although this algorithm does not produce a multigraph uniformly chosen at random from the set of all multigraphs having degree sequence ${\bf d}$, a simple graph uniformly chosen at random can be obtained by choosing a pairing uniformly at random and discarding the outcome if it has self-loops or multiple edges \cite{Wormald_99}.  The question that becomes important then is to estimate the probability with which the pairing model will produce a simple graph. For the undirected graph setting we have described, such results were given in \cite{Read_60, Ben_Can_78, Wormald_78, Bollobas_80, McKay_Worm_91} for regular $d$-graphs (graphs where each node has exactly degree $d$), and in \cite{McKay_Worm_90b, McKay_Worm_91,van2009random} for general graphical degree sequences. 

From the previous discussion, it should be clear that it is important to determine conditions under which the probability of obtaining a simple graph in the pairing model is bounded away from zero as $n \to \infty$.  Such conditions are essentially bounds on the rate of growth of the maximum (minimum) degree and/or the existence of certain limits (see, e.g., \cite{McKay_Worm_90b, McKay_Worm_91, van2009random}). The set of conditions given below is taken from \cite{van2009random}, and we include it here as a reference for the directed version discussed in this paper.

\begin{cond} \label{Cond.RegularityUndirected}
Given a degree sequence ${\bf d} = \{d_1, \dots, d_n\}$, let $D^{[n]}$ be the degree of a randomly chosen node, i.e.,
$$P(D^{[n]} = k) = \frac{1}{n} \sum_{i=1}^n 1(d_i = k).$$
\begin{enumerate}
\item {\em Weak convergence.} There exists a finite random variable $D$ taking values on the positive integers such that
$$D^{[n]} \Rightarrow D, \qquad n \to \infty.$$

\item {\em Convergence of the first moment.} 
$$\lim_{n\to\infty} E[D^{[n]}] = E[D].$$

\item {\em Convergence of the second moment.}
$$\lim_{n\to\infty} E[ (D^{[n]})^2] = E[D^2].$$
\end{enumerate}
\end{cond} 

\begin{rem}
It is straightforward to verify that if the degree sequence is chosen as an i.i.d. sample $\{D_1, \dots, D_n\}$ from some distribution $F$ on the positive integers having finite first moment, then parts (a) and (b) of Condition \ref{Cond.RegularityUndirected} are satisfied, and if $F$ has finite second moment then also part (c) is satisfied; the adjustment made to ensure that the sum of the degrees is even, if needed, can be shown to be negligible.  
\end{rem}

Condition \ref{Cond.RegularityUndirected} guarantees that the probability of obtaining a simple graph in the pairing model is bounded away from zero (see, e.g., \cite{van2009random}), in which case we can obtain a uniformly simple realization of the (graphical) degree sequence $\{d_i\}$ by repeating the random pairing until a simple graph is obtained. When part (c) of Condition  \ref{Cond.RegularityUndirected} fails, then an alternative is to simply erase the self-loops and multiple edges. These two approaches give rise to the {\em repeated} an {\em erased} configuration models, respectively.

Having given a brief description of the configuration model for undirected graphs, we will now discuss how to adapt it to draw directed graphs. The idea is basically the same, given a bi-degree-sequence $({\bf m}, {\bf d})$, to each node $v_i$ assign $m_i$ inbound half-edges and $d_i$ outbound half-edges; then, proceed to match inbound half-edges to outbound half-edges to form directed edges. To be more precise, for each unpaired inbound half-edge of node $v_i$ choose randomly from all the available unpaired outbound half-edges, and if the selected outbound half-edge belongs to node, say, $v_j$, then add a directed edge from $v_j$ to $v_i$ to the graph; proceed in this way until all unpaired inbound half-edges are matched. The following result shows that conditional on the graph being simple, it is uniformly chosen among all simple directed graphs having bi-degree-sequence $({\bf m}, {\bf d})$. All the proofs of Section \ref{S.ConfigurationModel} can be found in Subsection \ref{SS.ConfigurationModelProofs}. 

\begin{prop} \label{P.Uniformity}
Given a graphical bi-degree-sequence $({\bf m}, {\bf d})$, generate a directed graph according to the directed configuration model. Then, conditional on the obtained graph being simple, it is uniformly distributed among all simple directed graphs having bi-degree-sequence $({\bf m}, {\bf d})$.
\end{prop}

The question is now under what conditions will the probability of obtaining a simple graph be bounded away from zero as the number of nodes, $n$, goes to infinity. When this probability is bounded away from zero we can repeat the random pairing until we draw a simple graph: the repeated model; otherwise, we can always erase the self-loops and multiple edges in the same direction to obtain a simple graph: the erased model. These two models are discussed in more detail in the following two subsections, where we also provide sufficient conditions under which the the probability of obtaining a simple graph will be bounded away from zero.  

We end this section by mentioning that another important line of problems related to the drawing of simple graphs (directed or undirected) is the development of efficient simulation algorithms, see for example the recent work in \cite{Blit_Diac_11} using importance sampling techniques for drawing a simple graph with prescribed degree sequence $\{d_i\}$; similar ideas should also be applicable to the directed model.

\subsection{Repeated Directed Configuration Model}
\label{SS.Repeat}

In this section we analyze the directed configuration model using the bi-degree-sequence $({\bf N}, {\bf D})$ constructed in Subsection \ref{SS.Algorithm}. In order to do so we will first need to establish sufficient conditions under which the probability that the directed configuration model produces a simple graph is bounded away from zero as the number of nodes goes to infinity. Since this property does not directly depend on the specific bi-degree-sequence $({\bf N}, {\bf D})$, we will prove the result for general bi-degree-sequences $({\bf m}, {\bf d})$ satisfying an analogue of Condition \ref{Cond.RegularityUndirected}. As one may expect, we will require the existence of certain limits related to the (joint) distribution of the in-degree and out-degree of a randomly chosen node. Also, since the sequences $\{m_i\}$ and $\{d_i\}$ need to have the same sum, we prefer to consider a sequence of bi-degree-sequences, i.e., $\{({\bf m}_n, {\bf d}_n)\}_{n \in \mathbb{N}}$ where $({\bf m}_n, {\bf d}_n) = (\{ m_{n1}, \dots, m_{nn} \}, \{d_{n1}, \dots, d_{nn}\})$, since otherwise the equal sum constraint would greatly restrict the type of sequences we can use (e.g., $m_i = d_i$ for all $i \in \mathbb{N}$). The corresponding version of Condition~\ref{Cond.RegularityUndirected} is given below. 

\begin{cond} \label{Cond.RegularityDirected}
Given a sequence of bi-degree-sequences $\{({\bf m}_n, {\bf d}_n)\}_{n \in \mathbb{N}}$ satisfying 
$$\sum_{i=1}^n m_{ni} = \sum_{i=1}^n d_{ni} \qquad \text{for all $n$},$$
let $(N^{[n]}, D^{[n]})$ denote the in-degree and out-degree of a randomly chosen node, i.e.,
$$P((N^{[n]} , D^{[n]}) = (i,j)) = \frac{1}{n} \sum_{k=1}^n 1(m_{nk} = i, d_{nk} = j).$$
\begin{enumerate}
\item {\em Weak convergence.} There exist finite random variables $\gamma$ and $\xi$ taking values on the nonnegative integers and satisfying $E[\gamma] = E[\xi]  > 0$ such that
$$(N^{[n]} , D^{[n]}) \Rightarrow (\gamma, \xi), \qquad n \to \infty.$$

\item {\em Convergence of the first moments.} 
$$\lim_{n\to\infty} E[N^{[n]}] = E[\gamma] \qquad \text{and} \qquad \lim_{n\to\infty} E[D^{[n]}] = E[\xi].$$

\item {\em Convergence of the covariance.}
$$\lim_{n \to \infty} E[ N^{[n]} D^{[n]} ] = E[\gamma \xi]. $$

\item {\em Convergence of the second moments.}
$$\lim_{n\to\infty} E[ (N^{[n]})^2] = E[\gamma^2] \qquad \text{and} \qquad \lim_{n\to\infty} E[ (D^{[n]})^2] = E[\xi^2].$$

\end{enumerate}
\end{cond}

We now state a result that says that the number of self-loops and the number of multiple edges produced by the random pairing converge jointly, as $n \to \infty$, to a pair of independent Poisson random variables. As a corollary we obtain that the probability of the resulting graph being simple converges to a positive number, and is therefore bounded away from zero. The proof is an adaptation of the proof of Proposition 7.9 in \cite{van2009random}. 

Consider the multigraph obtained through the directed configuration model from the bi-degree-sequence $({\bf m}_n, {\bf d}_n)$, and let $S_n$ be the number of self-loops and $M_n$ be the number of multiple edges in the same direction, that is, if there are $k \geq 2$ (directed) edges from node $v_i$ to node $v_j$, they contribute $(k-1)$ to $M_n$. 

\begin{prop} \label{P.PoissonLimit}
(Poisson limit of self-loops and multiple edges) If $\{({\bf m}_n, {\bf d}_n)\}_{n \in \mathbb{N}}$ satisfies Condition~\ref{Cond.RegularityDirected} with $E[\gamma] = E[\xi] = \mu > 0$, then 
$$(S_n, M_n) \Rightarrow (S, M)$$
as $n \to \infty$, where $S$ and $M$ are two independent Poisson random variables with means 
$$\lambda_1 = \frac{E[\gamma \xi]}{\mu} \qquad \text{and} \qquad \lambda_2 = \frac{ E[\gamma(\gamma-1)] E[\xi(\xi-1)]}{2 \mu^2},$$
respectively. 
\end{prop}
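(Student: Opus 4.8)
The plan is to use the method of moments for Poisson convergence: I will show that the joint factorial moments of $(S_n,M_n)$ converge to those of a pair of independent Poisson variables, i.e.\ that $E[(S_n)_r(M_n)_s]\to\lambda_1^r\lambda_2^s$ for all fixed $r,s\ge 0$, where $(x)_r=x(x-1)\cdots(x-r+1)$ denotes the falling factorial. Since $E[(S)_r]=\lambda_1^r$, $E[(M)_s]=\lambda_2^s$ for independent Poisson variables and their joint factorial moment factorizes, convergence of these moments (the Poisson laws being determined by their moments) implies $(S_n,M_n)\Rightarrow(S,M)$. It is technically convenient to replace $M_n$ by the closely related count $\tilde M_n=\sum_{i\ne j}\binom{k_{ij}}{2}$, the number of \emph{pairs} of parallel edges, where $k_{ij}$ is the number of edges from $v_i$ to $v_j$; I will carry out the moment computation for $(S_n,\tilde M_n)$ and transfer the conclusion to $(S_n,M_n)$ at the end.

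The computation rests on one elementary fact about the directed pairing: since inbound half-edges are matched to outbound half-edges by a uniformly random bijection, the probability that $\ell$ prescribed, pairwise disjoint (inbound, outbound) half-edge pairs are all matched equals $1/(L_n)_\ell$, where $L_n=\sum_i m_{ni}=\sum_i d_{ni}$. Writing $S_n$ as the sum over nodes $v_i$ and over pairs consisting of one inbound and one outbound half-edge of $v_i$ of the indicator that the two are matched, I get $E[S_n]=\frac{1}{L_n}\sum_i m_{ni}d_{ni}$. Using $L_n=nE[N^{[n]}]\sim n\mu$ from condition (b) and $n^{-1}\sum_i m_{ni}d_{ni}=E[N^{[n]}D^{[n]}]\to E[\gamma\xi]$ from condition (c) gives $E[S_n]\to E[\gamma\xi]/\mu=\lambda_1$. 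Similarly, a pair of parallel edges from $v_i$ to $v_j$ is determined by two outbound half-edges of $v_i$, two inbound half-edges of $v_j$, and one of the two ways to match them, so $E[\tilde M_n]=\frac{1}{2(L_n)_2}\sum_{i\ne j}d_{ni}(d_{ni}-1)\,m_{nj}(m_{nj}-1)$; splitting the sum into its full product minus the diagonal and invoking conditions (b) and (d) to obtain $\sum_i m_{ni}(m_{ni}-1)\sim nE[\gamma(\gamma-1)]$ and likewise for the $d_{ni}$, this converges to $\lambda_2$.

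For the higher and joint factorial moments the same bookkeeping applies: $(S_n)_r(\tilde M_n)_s$ expands as a sum over ordered families of $r$ self-loop configurations and $s$ parallel-pair configurations, and the dominant contribution comes from the families occupying $r+2s$ distinct nodes through pairwise disjoint half-edges, each such family being present with probability $\approx L_n^{-(r+2s)}$. These dominant terms factor into the product of the individual first-moment expressions, yielding exactly $\lambda_1^r\lambda_2^s$; in particular the asymptotic independence of $S$ and $M$ emerges because typical self-loops and multiple edges use disjoint half-edges. The crux is thus to show that every \emph{degenerate} configuration (one reusing a node or a half-edge) is asymptotically negligible. The key estimate is that convergence of the second moments in condition (d) together with the weak convergence in (a) forces uniform integrability of $(N^{[n]})^2$ and $(D^{[n]})^2$, whence $\max_i m_{ni}=o(\sqrt n)$ and $\max_i d_{ni}=o(\sqrt n)$. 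From this I get $\sum_i m_{ni}^2 d_{ni}^2\le(\max_i m_{ni}^2)\sum_i d_{ni}^2=o(n)\cdot O(n)=o(n^2)$, which, after division by the appropriate power of $L_n\sim n\mu$, kills the diagonal term above and every configuration forcing two chosen structures to share a node or half-edge. The method of moments then yields $(S_n,\tilde M_n)\Rightarrow(S,M)$.

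Finally I transfer the conclusion from $\tilde M_n$ to $M_n$. The two counts differ only through ordered pairs $(v_i,v_j)$ joined by three or more parallel edges, and a third-order version of the same bound shows that the expected number of triple parallel edges is at most a constant times $(\sum_i m_{ni}^3)(\sum_i d_{ni}^3)/L_n^3$; since $\sum_i m_{ni}^3\le(\max_i m_{ni})\sum_i m_{ni}^2=o(n^{3/2})$ and likewise for the $d_{ni}$, this expectation tends to $0$, so $P(M_n\ne\tilde M_n)\to 0$ and $(S_n,M_n)$ shares the limit of $(S_n,\tilde M_n)$, which proves the claim. I expect the combinatorial accounting of the degenerate configurations in the joint factorial moments to be the most delicate part, with the uniform-integrability / $o(\sqrt n)$ maximal-degree bound being the single ingredient that forces all the error terms to vanish.
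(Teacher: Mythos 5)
Your proposal is correct and follows essentially the same route as the paper: replace $M_n$ by the pair-count $\tilde M_n$, prove joint convergence of factorial moments $E[(S_n)_r(\tilde M_n)_s]\to\lambda_1^r\lambda_2^s$ with conflicting/degenerate configurations shown negligible, and transfer back to $M_n$ via a bound on triple parallel edges. Your uniform-integrability derivation of $\max_i m_{ni}=o(\sqrt n)$, $\max_i d_{ni}=o(\sqrt n)$ is exactly the content of the paper's Lemma \ref{L.MaximumGrowth} (applied to the squared degrees), so the two arguments coincide in every essential step.
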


Since the probability of the graph being simple is $P( S_n = 0, M_n = 0)$, we obtain as a consequence the following theorem.

\begin{theo} \label{T.ProbSimple}
Under the assumptions of Proposition \ref{P.PoissonLimit}, 
$$\lim_{n \to \infty} P(\text{\rm graph obtained from $({\bf m}_n, {\bf d}_n)$ is simple}) = e^{-\lambda_1 - \lambda_2} > 0.$$
\end{theo}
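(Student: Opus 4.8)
The plan is to reduce the statement to a direct consequence of Proposition \ref{P.PoissonLimit}, together with the observation that the multigraph produced by the directed configuration model is \emph{simple} precisely when it has neither self-loops nor multiple edges in the same direction. In the notation of the proposition, this means that the event that the graph is simple coincides with $\{ S_n = 0, M_n = 0\}$, so that
$$P(\text{graph obtained from } ({\bf m}_n, {\bf d}_n) \text{ is simple}) = P(S_n = 0, M_n = 0).$$
It therefore suffices to show that this probability converges to $P(S = 0, M = 0)$, where $(S, M)$ is the limiting pair of independent Poisson variables supplied by Proposition \ref{P.PoissonLimit}.

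The main step is to upgrade the weak convergence $(S_n, M_n) \Rightarrow (S, M)$ to convergence of the single atom at the origin. Since $(S_n, M_n)$ and $(S,M)$ all take values in $(\mathbb{N} \cup \{0\})^2$, I would invoke the standard fact that for random vectors supported on the integer lattice weak convergence is equivalent to pointwise convergence of the joint probability mass functions. Concretely, I would apply the Portmanteau theorem to the open Euclidean ball $B = \{ x \in \mathbb{R}^2 : |x| < 1/2 \}$: its topological boundary is a circle containing no lattice points, so $P((S,M) \in \partial B) = 0$ and $B$ is a continuity set for the limit law. Because $(0,0)$ is the only lattice point contained in $B$, and both $(S_n, M_n)$ and $(S,M)$ are integer-valued, one has $P((S_n, M_n) \in B) = P(S_n = 0, M_n = 0)$ and likewise $P((S,M) \in B) = P(S = 0, M = 0)$, whence
$$P(S_n = 0, M_n = 0) \to P(S = 0, M = 0) \qquad \text{as } n \to \infty.$$

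Finally, using the independence of $S$ and $M$ and the fact that a Poisson random variable with mean $\lambda$ equals $0$ with probability $e^{-\lambda}$, I would compute
$$P(S = 0, M = 0) = P(S = 0)\, P(M = 0) = e^{-\lambda_1} e^{-\lambda_2} = e^{-\lambda_1 - \lambda_2},$$
which is strictly positive since $\lambda_1$ and $\lambda_2$ are finite under Condition \ref{Cond.RegularityDirected}. I do not anticipate a serious obstacle, as all the probabilistic work has already been carried out in Proposition \ref{P.PoissonLimit}; the only point requiring care is the passage from weak convergence of the integer-valued vector $(S_n, M_n)$ to convergence of the particular joint probability $P(S_n = 0, M_n = 0)$, which is exactly the continuity-set argument described above.
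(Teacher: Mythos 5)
Your proposal is correct and matches the paper's treatment: the paper states Theorem \ref{T.ProbSimple} as an immediate corollary of Proposition \ref{P.PoissonLimit}, using exactly the identification of the event that the graph is simple with $\{S_n = 0, M_n = 0\}$ and the convergence of this point probability under weak convergence of integer-valued vectors. Your Portmanteau continuity-set argument simply spells out the standard step that the paper leaves implicit.
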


It is clear from Proposition \ref{P.MainConditions} that Condition \ref{Cond.RegularityDirected} is satisfied by the bi-degree-sequence $({\bf N}, {\bf D})$ proposed in Subsection \ref{SS.Algorithm} whenever $F$ and $G$ have finite variance. This implies that one way of obtaining a simple directed graph on $n$ nodes is by first sampling the bi-degree-sequence $({\bf N}, {\bf D})$ according to Subsection \ref{SS.Algorithm}, then checking if it is graphical, and if it is, use the directed pairing model to draw a graph, discarding any realizations that are not simple. Alternatively, since the probability of $({\bf N}, {\bf D})$ being graphical converges to one, then one could skip the verification of graphicality and re-sample $({\bf N}, {\bf D})$ each time the pairing needs to be  repeated. 

The last thing we show in this section is that the degree distributions of the resulting simple graph will have with high probability the prescribed degree distributions $F$ and $G$, as required. More specifically, if we let $({\bf N}^{(r)}, {\bf D}^{(r)})$ be the bi-degree-sequence of the final simple graph obtained through the repeated directed configuration model with bi-degree-sequence $({\bf N}, {\bf D})$, then we will show that the joint distribution
$$h^{(n)}(i,j) = \frac{1}{n} \sum_{k=1}^n P(N_k^{(r)} = i, D_k^{(r)} = j)   \qquad i,j = 0, 1, 2, \dots,$$
converges to $f_i g_j$, and the empirical distributions,  
$$\widehat{f_k}^{(n)} = \frac{1}{n} \sum_{i=1}^n 1(N_i^{(r)} = k) \qquad \text{and} \qquad \widehat{g_k}^{(n)} = \frac{1}{n} \sum_{i=1}^n 1(D_i^{(r)} = k) \qquad k = 0, 1, 2, \dots,$$
converge in probability to $f_k$ and $g_k$, respectively.  The same result was shown in \cite{Britt_Deij_Mart_06} for the undirected case with i.i.d. degree sequence $\{D_i\}$. 

\begin{prop} \label{P.RepeatedDistr}
For the repeated directed configuration model with bi-degree-sequence $({\bf N}, {\bf D})$, as constructed in Subsection \ref{SS.Algorithm}, we have: 
\begin{enumerate}
\item $h^{(n)}(i,j) \to f_i g_j$  as $n \to \infty$, $i,j = 0, 1, 2, \dots$, and

\item for all $k = 0, 1, 2, \dots$, 
$$\widehat{f_k}^{(n)} \stackrel{P}{\longrightarrow} f_k \qquad \text{and} \qquad\widehat{g_k}^{(n)} \stackrel{P}{\longrightarrow} g_k, \qquad n \to \infty.$$
\end{enumerate}
\end{prop}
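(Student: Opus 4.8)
The plan is to exploit the fact that the repeated model re-samples $(\mathbf{N},\mathbf{D})$ and re-pairs until a simple graph is produced, so that the terminal bi-degree-sequence $(\mathbf{N}^{(r)},\mathbf{D}^{(r)})$ has the law of $(\mathbf{N},\mathbf{D})$ \emph{conditioned} on the event $\mathcal{G}_n$ that the directed configuration model applied to $(\mathbf{N},\mathbf{D})$ yields a simple graph. Write $p_n = P(\mathcal{G}_n \mid \mathbf{N},\mathbf{D})$ for the conditional probability of simplicity given the degrees; this is a deterministic function of $(\mathbf{N},\mathbf{D})$ lying in $[0,1]$. By Proposition~\ref{P.MainConditions} (and under the standing finite--variance assumption), the empirical quantities of $(\mathbf{N},\mathbf{D})$ satisfy all four parts of Condition~\ref{Cond.RegularityDirected} in probability, with limiting variables $\gamma\sim F$ and $\xi\sim G$ independent. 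The two external inputs I will use are this proposition and the Poisson limit of Proposition~\ref{P.PoissonLimit} / Theorem~\ref{T.ProbSimple}, which identify the limiting probability of simplicity as $c := e^{-\lambda_1-\lambda_2}>0$.

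The first and main step is to upgrade Theorem~\ref{T.ProbSimple}, stated for deterministic degree sequences, to the random sequence $(\mathbf{N},\mathbf{D})$, i.e.\ to show $P(\mathcal{G}_n) = E[p_n] \to c$. Since $p_n$ is bounded, by bounded convergence it suffices to prove $p_n \stackrel{P}{\longrightarrow} c$. I would argue this by the subsequence criterion for convergence in probability: along any subsequence, Proposition~\ref{P.MainConditions} lets me extract a further subsequence on which the joint empirical distribution and the relevant first, second, and cross moments converge almost surely; on that subsequence almost every realization of $(\mathbf{N},\mathbf{D})$ is a deterministic degree sequence satisfying Condition~\ref{Cond.RegularityDirected}, so Proposition~\ref{P.PoissonLimit} applies and gives $p_n \to c$. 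This yields $p_n \stackrel{P}{\longrightarrow} c$ and hence $P(\mathcal{G}_n)\to c>0$. This transfer from deterministic to random bi-degree-sequences is the part that requires the most care.

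For part (a), I would rewrite the conditioned joint law as $h^{(n)}(i,j) = E[\mu_n(i,j)\,p_n]/E[p_n]$, where $\mu_n(i,j) = \tfrac1n\sum_{k=1}^n 1(N_k=i,D_k=j)$. Proposition~\ref{P.MainConditions} gives $\mu_n(i,j)\stackrel{P}{\longrightarrow} f_i g_j$, and since $|\mu_n(i,j)-f_ig_j|\le 1$ and $0\le p_n\le 1$, bounded convergence yields $E\big[(\mu_n(i,j)-f_ig_j)p_n\big]\to 0$; combined with $E[p_n]\to c$ this gives $E[\mu_n(i,j)p_n] \to f_i g_j\,c$. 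Dividing by $E[p_n]\to c>0$ then gives $h^{(n)}(i,j)\to f_i g_j$.

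For part (b), note that $\widehat{f_k}^{(n)}$ has, by construction, the distribution of $\widehat{f_k}^{(n,0)} := \tfrac1n\sum_{i=1}^n 1(N_i=k)$ under the conditional measure $P(\cdot\mid\mathcal{G}_n)$. The argument used in the proof of Proposition~\ref{P.MainConditions} (comparing $N_i$ with $\gamma_i$, which differ on at most $|\Delta_n| = o(n)$ coordinates, together with the weak law of large numbers) gives the unconditional convergence $\widehat{f_k}^{(n,0)} \stackrel{P}{\longrightarrow} f_k$. Then for every $\epsilon>0$ the elementary bound $P(A\mid\mathcal{G}_n)\le P(A)/P(\mathcal{G}_n)$ with $A = \{|\widehat{f_k}^{(n,0)}-f_k|>\epsilon\}$ gives $P(|\widehat{f_k}^{(n)}-f_k|>\epsilon) \le P(|\widehat{f_k}^{(n,0)}-f_k|>\epsilon)/P(\mathcal{G}_n) \to 0/c = 0$, so $\widehat{f_k}^{(n)}\stackrel{P}{\longrightarrow} f_k$; the argument for $\widehat{g_k}^{(n)}$ is identical. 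The essential obstacle throughout is the first step, since everything else reduces to the observation that conditioning on an event of probability bounded away from zero preserves the limits already established for the unconditioned sequence.
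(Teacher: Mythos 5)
Your proposal is correct and follows essentially the same route as the paper's proof: identify the law of $({\bf N}^{(r)}, {\bf D}^{(r)})$ as that of $({\bf N}, {\bf D})$ conditioned on the pairing being simple, use Proposition~\ref{P.MainConditions} together with Proposition~\ref{P.PoissonLimit} and Theorem~\ref{T.ProbSimple} to conclude that both the conditional and unconditional probabilities of simplicity converge to $e^{-\lambda_1-\lambda_2}>0$, and then pass the unconditional limits (empirical joint distribution for part (a), the bound $P(A\mid \mathcal{S}_n)\leq P(A)/P(\mathcal{S}_n)$ for part (b)) through the conditioning. If anything, your subsequence argument spells out the transfer from deterministic bi-degree-sequences satisfying Condition~\ref{Cond.RegularityDirected} to the random sequence $({\bf N},{\bf D})$, a step the paper invokes without further comment.
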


\begin{rem}
Note that by the continuous mapping theorem, (a) implies that the marginal distributions of the in-degrees and out-degrees,
$$f^{(n)}(i) =  \frac{1}{n} \sum_{k=1}^n P(N_k^{(r)} =i) \qquad \text{and} \qquad g^{(n)}(j) = \frac{1}{n} \sum_{k=1}^n P(D_k^{(r)} =j),$$
converge to $f_i$ and $g_j$, respectively. The same arguments used in the proof also give that the joint empirical distribution converges to $f_i g_j$ in probability.
\end{rem}

\subsection{Erased directed configuration model}
\label{SS.Erase}

In this section we consider the erased directed configuration model, which is particularly useful when the probability of drawing a simple graph converges to zero as the number of nodes increases, which could happen, for example, when Condition~\ref{Cond.RegularityDirected} (d) fails. Given a bi-degree-sequence $({\bf m}, {\bf d})$, the erased model consists in first obtaining a multigraph according to the directed configuration model and then erase all self-loops and merge multiple edges in the same direction into a single edge, with the result being a simple graph. Note that the graph obtained through this process no longer has $({\bf m}, {\bf d})$ as its bi-degree-sequence.

As for the repeated model, let $({\bf N}^{(e)}, {\bf D}^{(e)})$ be the bi-degree-sequence of the simple graph obtained through the erased directed configuration model with bi-degree-sequence $({\bf N}, {\bf D})$. Define the joint distribution 
$$h^{(n)}(i,j) =  \frac{1}{n} \sum_{k=1}^n P(N_k^{(e)} = i, D_k^{(e)} = j)   \qquad i,j = 0, 1, 2, \dots,$$
and the empirical distributions,  
$$\widehat{f_k}^{(n)} = \frac{1}{n} \sum_{i=1}^n 1(N_i^{(e)} = k) \qquad \text{and} \qquad \widehat{g_k}^{(n)} = \frac{1}{n} \sum_{i=1}^n 1(D_i^{(e)} = k) \qquad k = 0, 1, 2, \dots.$$
The following result is the analogue of Proposition \ref{P.RepeatedDistr} for the erased model.

\begin{prop} \label{P.ErasedDistr}
For the erased directed configuration model with bi-degree-sequence $({\bf N}, {\bf D})$, as constructed in Subsection \ref{SS.Algorithm}, we have: 
\begin{enumerate}
\item $h^{(n)}(i,j) \to f_i g_j$  as $n \to \infty$, $i,j = 0, 1, 2, \dots$, and

\item for all $k = 0, 1, 2, \dots$, 
$$\widehat{f_k}^{(n)} \stackrel{P}{\longrightarrow} f_k \qquad \text{and} \qquad\widehat{g_k}^{(n)} \stackrel{P}{\longrightarrow} g_k, \qquad n \to \infty.$$
\end{enumerate}
\end{prop}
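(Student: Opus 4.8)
The plan is to prove Proposition \ref{P.ErasedDistr} by comparing the erased bi-degree-sequence $({\bf N}^{(e)}, {\bf D}^{(e)})$ with the original sequence $({\bf N}, {\bf D})$ and showing that the erasing procedure changes the degree of a typical node only negligibly. The key observation is that for a node $v_k$, the erased in-degree $N_k^{(e)}$ and out-degree $D_k^{(e)}$ differ from $N_k$ and $D_k$ only by the number of half-edges of $v_k$ that were removed in erasing self-loops and in merging multiple edges. If I let $R_k^{\text{in}}$ and $R_k^{\text{out}}$ denote the respective numbers of inbound and outbound half-edges of $v_k$ removed during erasing, then $N_k^{(e)} = N_k - R_k^{\text{in}}$ and $D_k^{(e)} = D_k - R_k^{\text{out}}$. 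So the entire proof reduces to controlling the aggregate number of removed half-edges.

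\emph{Step 1.} First I would bound, for a fixed node, the expected number of self-loops and multiple edges incident to it. Using the uniform random pairing, the probability that a given inbound half-edge of $v_k$ pairs with a given outbound half-edge of $v_k$ is $O(1/L_n)$, where $L_n = \sum_i N_i$. Summing over the $N_k D_k$ pairs gives an expected self-loop contribution of order $N_k D_k / L_n$. A similar computation for the expected number of multiple edges from $v_k$ involves summing over pairs of half-edges of $v_k$ landing on the same target node, giving contributions controlled by quantities like $D_k^2 \max_j N_j / L_n^2$ and $N_k^2 \max_j D_j / L_n^2$. By Proposition \ref{P.MainConditions}, $L_n/n \stackrel{P}{\longrightarrow} \mu$, so $L_n$ grows linearly in $n$, and these per-node contributions vanish.

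\emph{Step 2.} For part (a), I would write
$$h^{(n)}(i,j) = \frac{1}{n} \sum_{k=1}^n P(N_k - R_k^{\text{in}} = i,\, D_k - R_k^{\text{out}} = j),$$
and decompose each summand according to whether $R_k^{\text{in}} = R_k^{\text{out}} = 0$ or not. On the event that no half-edges of $v_k$ are erased, the joint law of $(N_k, D_k)$ converges to $f_i g_j$ by Theorem \ref{T.WeakConvergence}; the complementary event has probability bounded by $E[R_k^{\text{in}} + R_k^{\text{out}}]$, which by Step 1 is $o(1)$ uniformly enough that the Cesàro average over $k$ vanishes. This yields $h^{(n)}(i,j) \to f_i g_j$. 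For part (b), I would show the empirical distribution $\widehat{f_k}^{(n)}$ is close in probability to $\frac{1}{n}\sum_{i=1}^n 1(N_i = k)$, which converges to $f_k$ by Proposition \ref{P.MainConditions}; the difference is bounded by $\frac{1}{n}\sum_i 1(R_i^{\text{in}} > 0)$, whose expectation is $o(1)$ by Step 1, giving convergence in probability via Markov's inequality.

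\emph{The main obstacle} I anticipate is Step 1 under the weak hypotheses in force here: Proposition \ref{P.MainConditions} only guarantees convergence of the empirical second moments $\frac{1}{n}\sum_i N_i^2$ and $\frac{1}{n}\sum_i D_i^2$ when $F$ and $G$ have finite variance, but the erased model is meant to work with only finite mean (this is precisely its advantage over the repeated model). The bound on multiple edges naively requires second-moment control, so I must argue more carefully — e.g. by truncating the degrees at a level $b_n \to \infty$ growing slowly, bounding the maximum degree via the regular-variation tails in \eqref{eq:powerlaw-majorant} (which give $\max_i N_i = o(n^{1/\alpha} \cdot \text{slowly varying})$), and showing the removed-half-edge count is dominated by self-loops, whose expectation $N_k D_k / L_n$ only needs first-moment control. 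Balancing the truncation so that both the truncated contribution and the tail contribution vanish is the delicate part and is where the finite-mean-only argument will live.
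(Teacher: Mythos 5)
Your proposal is correct in outline, and it reaches the conclusion by a genuinely different route for the key technical step. Your Step 2 is essentially identical to the paper's proof of the proposition: by exchangeability the comparison of $h^{(n)}(i,j)$ with $P(N_1=i,D_1=j)$ reduces to the probability that node $v_1$ loses at least one stub, and part (b) follows by Markov's inequality from the same quantity. Where you diverge is in how that probability is killed. The paper proves Lemma \ref{L.SurvivingStubs} by computing the survival probability $P(E^+=0)$ directly: conditioning on the degrees, writing an exact combinatorial expression for the event that all inbound stubs of $v_1$ pair to distinct other nodes, and passing to the limit via Fatou's lemma and the law-of-large-numbers argument from Lemma \ref{L.DifferenceLimit}. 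You instead use a first-moment (union) bound on the number of erased stubs at $v_1$: the conditional expectation of the self-loop count is $N_1D_1/L_n$, and the conditional expected number of multi-edge pairs incident to $v_1$ is at most of order $D_1^2\sum_j N_j(N_j-1)/L_n^2$ plus the symmetric term. This is more elementary and avoids the delicate computation entirely, and the obstacle you worry about dissolves more easily than you anticipate: no truncation balancing is needed, since $\sum_j N_j(N_j-1)\leq \bigl(\max_j N_j\bigr)L_n$, which yields the bound $D_1^2\max_j N_j/L_n$ (note your exponent $L_n^2$ is a slip; the correct denominator after this step is $L_n$, which is still ample). Moreover $\max_j N_j=o(n)$ follows already from finite mean of $F$ (a.s., by Borel--Cantelli, since $\gamma_j/j\to 0$), or with explicit rates $O_P(n^{(1+\epsilon)/\alpha})$ from \eqref{eq:powerlaw-majorant} as you suggest; either way $D_1^2\max_j N_j/L_n\stackrel{P}{\longrightarrow}0$ because $(N_1,D_1)$ is tight and $L_n/n\stackrel{P}{\longrightarrow}\mu$, and bounded convergence then gives $P(R_1^{\mathrm{in}}+R_1^{\mathrm{out}}>0)\to 0$ unconditionally.

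Two small imprecisions to fix when writing this up. First, the claim that the removed-half-edge count is ``dominated by self-loops'' is not right: multiple edges between $v_1$ and other nodes are a separate contribution of the same importance, and it is exactly this term that needs the $\bigl(\max_j N_j\bigr)L_n$ bound above; your plan contains the right ingredient, but the sentence as stated misattributes the work. Second, in part (b) you cite Proposition \ref{P.MainConditions} for $\frac{1}{n}\sum_i 1(N_i=k)\stackrel{P}{\longrightarrow}f_k$; that proposition states the joint limit $\frac{1}{n}\sum_i 1(N_i=k,D_i=j)\stackrel{P}{\longrightarrow}f_kg_j$, so either derive the marginal from it (a Scheff\'e-type argument, since both are probability mass functions) or simply repeat the one-line Chebyshev argument on the $\gamma_i$'s plus the $|\Delta_n|/n$ perturbation bound, as the paper does inside Proposition \ref{P.RepeatedDistr}.
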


\section{Proofs}

In this section we give the proofs of all the results in the paper. We divide the proofs into two subsections, one containing those belonging to Section \ref{S.DegreeSequences} and those belonging to Section \ref{S.ConfigurationModel}. Throughout the remainder of the paper we use the following notation: $g(x) \sim f(x)$ if $\lim_{x \to \infty} g(x)/f(x) = 1$, $g(x) = O (f(x))$ if $\limsup_{x \to \infty} g(x)/f(x) < \infty$, and $g(x) = o( f(x))$ if $\lim_{x \to \infty} g(x)/f(x) = 0$.

\subsection{Degree Sequences} \label{SS.DegreeSequencesProofs}

This subsection contains the proofs of Lemma \ref{L.DifferenceLimit}, Theorems \ref{thm:always-graphical} and \ref{T.WeakConvergence}, and Proposition \ref{P.MainConditions}. 

\begin{proof}[Proof of Lemma \ref{L.DifferenceLimit}]
Let $Z_i = \gamma_i - \xi_i$ and note that the $\{Z_i\}$ are i.i.d. mean zero random variables. If $E[ Z_1^2 ] < \infty$, then Chebyshev's inequality gives
$$P(\mathcal{D}_n^c) = P\left( \left| \sum_{i=1}^n Z_i \right| > n^{1/2+\delta_0} \right) \leq \frac{n \var(Z_1)}{n^{1+2\delta_0}} = O( n^{-2\delta_0}) = o(1)$$
as $n \to \infty$. 

Suppose now that $E[Z_1^2] = \infty$, which implies that $\kappa = 1 - \max\{\alpha^{-1}, \beta^{-1}\} \in (0, 1/2]$. Let $\theta = \max\{\alpha^{-1}, \beta^{-1}\}$, define $t_n = n^{\theta + \epsilon}$, $0 < \epsilon < \min\{\delta_0, \theta^{-1} - \theta\}$, and let $\{\tilde Z_i\}$ be a sequence of i.i.d. random variables having distribution $P(\tilde Z_1 \leq x) = P(Z_1 \leq x | |Z_1| \leq t_n)$.    Then, 
\begin{align*}
&P\left(  \left| \sum_{i=1}^n Z_i \right| > n^{1-\kappa+\delta_0} \right) \\
&= P\left( \left| \sum_{i=1}^n \tilde Z_i \right|  > n^{1-\kappa+\delta_0} \right) P(|Z_1| \leq t_n)^n  + P\left( \left| \sum_{i=1}^n Z_i \right| > n^{1-\kappa+\delta_0}, \, \max_{1\leq i \leq n} |Z_i| > t_n  \right) \\
&\leq P\left( \left| \sum_{i=1}^n \tilde Z_i - nE[\tilde Z_1] \right| + n|E[\tilde Z_1]|  > n^{1-\kappa+\delta_0} \right) + P\left( \max_{1\leq i \leq n} |Z_i| > t_n  \right) .
\end{align*}
By the union bound,
\begin{align*}
P\left( \max_{1\leq i \leq n} |Z_i| > t_n  \right) &\leq n P(|Z_1| > t_n) \leq nP(\gamma_1 + \xi_1 > t_n) \leq nP(\gamma_1 > t_n/2) + nP(\xi_1 > t_n/2) \\
&\leq n (t_n/2)^{-\alpha} L_F(t_n/2) + n (t_n/2)^{-\beta} L_G(t_n/2) \\
&= O \left( n^{1-\alpha(\theta+\epsilon)} L_F(t_n) + n^{1-\beta(\theta+\epsilon)} L_G(t_n) \right) \\
&= O\left( n^{-\alpha\epsilon} L_F(t_n) + n^{-\beta\epsilon} L_G(t_n) \right) 
\end{align*}
as $n \to \infty$, which converges to zero by basic properties of slowly varying functions (see, e.g., Proposition~1.3.6 in \cite{BiGoTe_1987}). Next, note that since $E[Z_1] = 0$, 
$$|E[ \tilde Z_1 ]| = \frac{| E[ Z_1 1(|Z_1| > t_n)] |}{P(|Z_1| \leq t_n)} \leq \frac{E[ |Z_1| 1(|Z_1| > t_n)] }{P(|Z_1| \leq t_n)} \leq (1+o(1))  \left( t_n P(|Z_1| > t_n) + \int_{t_n}^\infty P(|Z_1| > z) dz \right).$$
To estimate the integral note that
\begin{align*}
\int_{t_n}^\infty P(|Z_1| > z) dz &\leq \int_{t_n}^\infty ( P(\gamma_1 > z/2) + P(\xi_1 > z/2) ) dz \\
&\leq 2\int_{t_n/2}^\infty \left( u^{-\alpha} L_F(u) + u^{-\beta} L_G(u) \right) du \\
&\sim 2 \left(  (\alpha-1)^{-1} (t_n/2)^{-\alpha+1} L_F(t_n/2) + (\beta-1)^{-1} (t_n/2)^{-\beta+1} L_G(t_n/2) \right) \\
&= O\left( n^{-(\alpha-1)(\theta+\epsilon)} L_F(t_n) + n^{-(\beta-1)(\theta+\epsilon)} L_G(t_n) \right) ,
\end{align*}
where in the third step we used Proposition 1.5.10 in \cite{BiGoTe_1987}. Now note that
$$\min\{ (\alpha-1)(\theta+\epsilon) , (\beta-1)(\theta+\epsilon) \} = (\theta^{-1} -1)(\theta+\epsilon) = \kappa +\epsilon (\theta^{-1} - 1),$$
from where it follows that
$$|E[ \tilde Z_1 ]| = O\left( n^{-\kappa - \epsilon(\theta^{-1}-1)} (L_F(t_n) + L_G(t_n))  \right) = o\left( n^{-\kappa + \delta_0}  \right)$$ 
as $n \to \infty$. In view of this, we can use Chebyshev's inequality to obtain
\begin{equation}\label{eq:TruncChebyshev}
P\left( \left| \sum_{i=1}^n \tilde Z_i - nE[\tilde Z_1] \right| + n|E[\tilde Z_1]|  > n^{1-\kappa+\delta_0} \right) \leq \frac{\var(\tilde Z_1)}{n^{1-2(\kappa-\delta_0)} (1+o(1))}.
\end{equation}
Finally, to see that this last bound converges to zero note that
\begin{align*}
\var(\tilde Z_1) &\leq E[ \tilde Z_1^2 ] = \frac{1}{P(|Z_1| \leq t_n)} E[ Z_1^2 1(|Z_1| \leq t_n)] \leq (1+o(1)) E\left[ |Z_1|^{\theta^{-1}-\epsilon}  \right] t_n^{2-\theta^{-1}+\epsilon},
\end{align*}
where $E[ |Z_1|^{\theta^{-1}-\epsilon} ] < \infty$ by the remark following \eqref{eq:powerlaw-majorant}. We conclude that \eqref{eq:TruncChebyshev} is of order
$$O\left(  t_n^{2-\theta^{-1}+\epsilon} n^{2(\kappa-\delta_0) - 1} \right) = O\left( n^{(\theta+\epsilon) (2-\theta^{-1}+\epsilon) + 2(\kappa-\delta_0) - 1} \right) = o\left( n^{-2(\delta_0 - \epsilon)} \right) = o(1)$$
as $n \to \infty$. This completes the proof. 
\end{proof}

Before giving the proof of Theorem \ref{thm:always-graphical} we will need the following preliminary lemma.

\begin{lemma} \label{L.SumsOrderStats}
Let $\{X_1, \dots, X_n\}$ be an i.i.d. sequence of nonnegative random variables having distribution function $V$, and let $X^{(i)}$ denote the $i$th order statistic. Then, for any $k \leq n$,
$$\sum_{i=n-k+1}^n E\left[ X^{(i)} \right] \leq \int_0^\infty \min\left\{ n \overline{V}(x), k \right\} dx.$$
\end{lemma}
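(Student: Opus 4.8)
The plan is to use the layer-cake (tail) representation of the expectation, which is available here because the $X_j$ are nonnegative: $E[X^{(i)}] = \int_0^\infty P(X^{(i)} > x)\, dx$. Summing over the top $k$ order statistics and interchanging the finite sum with the integral (legitimate by Tonelli, everything being nonnegative), the lemma reduces to the pointwise estimate
$$\sum_{i=n-k+1}^n P(X^{(i)} > x) \leq \min\{ n\overline{V}(x), k\} \qquad \text{for every fixed } x \geq 0.$$

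The key idea is to relate the order statistics to the counting variable $B_n(x) = \sum_{j=1}^n 1(X_j > x)$, the number of samples exceeding the level $x$, which is $\mathrm{Binomial}(n, \overline{V}(x))$ with mean $E[B_n(x)] = n\overline{V}(x)$. The event that the $i$th smallest sample exceeds $x$ is exactly the event that at least $n-i+1$ of the samples do, i.e. $\{X^{(i)} > x\} = \{B_n(x) \geq n-i+1\}$, so $P(X^{(i)} > x) = P(B_n(x) \geq n-i+1)$. Reindexing with $j = n-i+1$ (so that $i = n-k+1,\dots,n$ corresponds to $j = k,\dots,1$) turns the left-hand side into
$$\sum_{i=n-k+1}^n P(X^{(i)} > x) = \sum_{j=1}^k P(B_n(x) \geq j).$$

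From this representation both halves of the minimum follow by elementary bounds. Each probability is at most $1$, so the sum of $k$ terms is at most $k$. On the other hand, extending the sum over all $j \geq 1$ and using the tail-sum formula for the mean of a nonnegative integer-valued random variable gives $\sum_{j=1}^k P(B_n(x) \geq j) \leq \sum_{j \geq 1} P(B_n(x) \geq j) = E[B_n(x)] = n\overline{V}(x)$. Combining the two bounds yields the desired pointwise inequality, and integrating over $x \in [0,\infty)$ completes the proof.

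I expect no serious difficulty; the only real care needed is bookkeeping. The event identity $\{X^{(i)} > x\} = \{B_n(x) \geq n-i+1\}$ and the reindexing $j = n-i+1$ are both prone to off-by-one errors, so verifying the two endpoints ($i=n$ giving $j=1$, and $i=n-k+1$ giving $j=k$) explicitly is the step I would be most careful about. The interchange of sum and integral and the tail-sum identity are immediate from nonnegativity.
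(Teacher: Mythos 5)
Your proof is correct, and it follows the same skeleton as the paper's: both start from the tail representation $E[X^{(i)}] = \int_0^\infty P(X^{(i)}>x)\,dx$, interchange sum and integral, and reduce everything to the binomial count of samples exceeding level $x$. The only real difference is the finishing move. The paper writes the summed tail probabilities as $\int_0^\infty E\left[\min\{B(n,\overline{V}(x)),k\}\right]dx$ and then applies Jensen's inequality to the concave function $t \mapsto \min\{t,k\}$ to obtain the pointwise bound $\min\{n\overline{V}(x),k\}$. You instead write the same quantity as $\sum_{j=1}^k P(B_n(x)\geq j)$ --- note that this sum equals $E[\min\{B_n(x),k\}]$ by the truncated tail-sum formula, so you are bounding exactly the same object --- and get the two halves of the minimum by elementary means: each probability is at most $1$ (giving $k$), and extending the sum over all $j\geq 1$ gives $E[B_n(x)] = n\overline{V}(x)$. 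Your route thus avoids Jensen's inequality altogether, exploiting instead that $B_n(x)$ is integer-valued so that concavity can be replaced by term-by-term bounds; the paper's Jensen step is marginally more general (it would apply to a non-integer-valued count as well), but for this lemma both arguments are equally rigorous and yours is the more elementary. Your event identity $\{X^{(i)}>x\}=\{B_n(x)\geq n-i+1\}$ and the reindexing endpoints check out, so there is no gap.
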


\begin{proof}
Note that
$$E\left[ X^{(i)} \right] = \int_0^\infty P(X^{(i)} > x) \, dx = \int_0^\infty \sum_{j = n-i+1}^n \binom{n}{j} \overline{V}(x)^j V(x)^{n-j} dx,$$
from where it follows 
\begin{align*}
\sum_{i=n-k+1}^n E\left[ X^{(i)} \right] &= \sum_{i=n-k+1}^n \sum_{j = n-i+1}^n \binom{n}{j} \int_0^\infty  \overline{V}(x)^j V(x)^{n-j} dx \\
&=  \sum_{j=1}^{n} \min\{j, k\} \binom{n}{j} \int_0^\infty \overline{V}(x)^{j} V(x)^{n-j} dx \\
&= \int_0^\infty E\left[  \min\{ B(n, \overline{V}(x)), k \} \right] dx,
\end{align*}
where $B(n,p)$ is a Binomial$(n,p)$ random variable. Since the function $u(t) = \min\{t, k\}$ is concave, Jensen's inequality gives
$$E\left[  \min\{ B(n, \overline{V}(x)), k \} \right] \leq \min\left\{ E[ B(n, \overline{V}(x)) ], k \right\} = \min\left\{ n\overline{V}(x), k \right\}.$$
\end{proof}

\begin{proof}[Proof of Theorem \ref{thm:always-graphical}]
Since by construction $\sum_{i=1}^n N_i = \sum_{i=1}^n D_i$, it follows from Theorem~\ref{thm:graphical-condition} that it suffices to show that
\[
\lim_{n \to \infty} P\left( \max_{A\subseteq V} \left(\sum_{v_{i} \in A} N_{i} - \sum_{i=1}^{n} \min\{D_{i}, |A-\{v_{i}\}|\} \right) > 0 \right) = 0. 
\]
Fix $0 < \epsilon < \min\{ \beta -1, \alpha-1, 1/2\}$ and use the union bound to obtain
\begin{align}
&P\left( \max_{A\subseteq V} \left(\sum_{v_{i} \in A} N_{i} - \sum_{i=1}^{n} \min\{D_{i}, |A-\{v_{i}\}|\} \right) > 0 \right)  \notag \\
&\leq P\left( \max_{A\subseteq V, |A| \leq n^{(1+\epsilon)/\beta} } \left(\sum_{v_{i} \in A} N_{i} - \sum_{i=1}^{n} \min\{D_{i}, |A-\{v_{i}\}|\} \right) > 0 \right) \label{eq:smallSets} \\
&\hspace{5mm} + P\left( \max_{A\subseteq V, |A| > n^{(1+\epsilon)/\beta} } \left(\sum_{v_{i} \in A} N_{i} - \sum_{i=1}^{n} \min\{D_{i}, |A-\{v_{i}\}|\} \right) > 0 \right). \label{eq:bigSets}
\end{align}
By conditioning on how many of the $D_i$ are larger than $n^{(1+\epsilon)/\beta}$ we obtain that \eqref{eq:bigSets} is bounded by
\begin{align*}
&P\left( \max_{A\subseteq V, |A| > n^{(1+\epsilon)/\beta} } \left(\sum_{v_{i} \in A} N_{i} - \sum_{i=1}^{n} \min\{D_{i}, |A-\{v_{i}\}|\} \right) > 0, \, \max_{1 \leq i \leq n} D_i \leq n^{(1+\epsilon)/\beta} \right) \\
&\hspace{5mm} + P\left(  \max_{1 \leq i \leq n} D_i > n^{(1+\epsilon)/\beta} \right) \\
&\leq P\left( \max_{A\subseteq V, |A| > n^{(1+\epsilon)/\beta} } \left(\sum_{v_{i} \in A} N_{i} - \sum_{i=1}^{n} D_{i}  \right) > 0 \right) + P\left(  \max_{1 \leq i \leq n} D_i > n^{(1+\epsilon)/\beta} \right) \\
&= P\left( \left.  \max_{1 \leq i \leq n} (\xi_i + \chi_i)  > n^{(1+\epsilon)/\beta} \right| \mathcal{D}_n \right),
\end{align*}
where $\mathcal{D}_n$ was defined in Lemma \ref{L.DifferenceLimit}. Now note that by the union bound we have
\begin{align*}
P\left( \left.  \max_{1 \leq i \leq n} (\xi_i + \chi_i)  > n^{(1+\epsilon)/\beta} \right| \mathcal{D}_n \right) &\leq \frac{1}{P(\mathcal{D}_n)} \cdot P\left( \max_{1 \leq i \leq n} (\xi_i + \chi_i)  > n^{(1+\epsilon)/\beta} \right) \\
&\leq \frac{1}{P(\mathcal{D}_n)} \sum_{i=1}^n P\left( \xi_i + \chi_i > n^{(1+\epsilon)/\beta} \right)  \\
&\leq \frac{1}{P(\mathcal{D}_n)} \cdot n \left( n^{(1+\epsilon)/\beta} -1 \right)^{-\beta} L_G \left( n^{(1+\epsilon)/\beta} -1 \right) \\
&= O\left( n^{-\epsilon} L_G \left( n^{(1+\epsilon)/\beta} \right) \right) = o(1) ,
\end{align*}
as $n \to \infty$, where the last step follows from Lemma \ref{L.DifferenceLimit} and basic properties of slowly varying functions (see, e.g., Chapter 1 in \cite{BiGoTe_1987}).  

Next, to analyze \eqref{eq:smallSets} let $k_n = \lfloor n^{(1+\epsilon)/\beta} \rfloor$ and note that we can write it as
\begin{align*}
&P\left( \max_{A\subseteq V,   |A| \leq k_n } \left(\sum_{v_{i} \in A} N_{i} - \sum_{i=1}^{n} \min\{D_{i}, |A-\{v_{i}\}|\} \right) > 0 \right) \\
&\leq P\left( \max\left\{ \max_{A\subseteq V,  \, 2 \leq |A| \leq k_n } \left(\sum_{v_{i} \in A} N_{i} - \sum_{i=1}^{n} \min\{D_{i}, 1 \} \right), \,   \max_{1 \leq j \leq n} \left( N_{j} - \sum_{i=1}^{n} \min\{D_{i}, |\{ v_j \} -\{v_{i}\}|\} \right) \right\} > 0 \right)  \\
&= P\left( \max\left\{ \sum_{i=n - k_n + 1}^n N^{(i)}  , \,  (N+D)^{(n)}  \right\} - \sum_{i=1}^{n} \min\{D_{i}, 1 \}  > 0 \right),
\end{align*}
where $x^{(i)}$ is the $i$th smallest of $\{x_1, \dots, x_n\}$. Now let $a_0 = E[ \min\{ \xi_1, 1 \} ] = \overline{G}(0) > 0$ and split the last probability as follows
\begin{align}
&P\left( \max\left\{ \sum_{i=n - k_n + 1}^n  N^{(i)}  , \,  (N+D)^{(n)}  \right\} - \sum_{i=1}^{n} \min\{D_{i}, 1 \}  > 0 \right) \notag \\
&\leq P\left( \max\left\{ \sum_{i=n - k_n+ 1}^n  N^{(i)}  , \,  (N+D)^{(n)}  \right\}   >  a_0 n - n^{1/2+\epsilon}, \,  \sum_{i=1}^{n} \min\{D_{i}, 1 \} \geq a_0 n - n^{1/2+\epsilon} \right) \label{eq:twoMaximums} \\
&\hspace{5mm} + P\left( \sum_{i=1}^{n} \min\{D_{i}, 1 \} < a_0 n - n^{1/2+\epsilon} \right). \label{eq:Chebyshev}
\end{align}
To bound \eqref{eq:Chebyshev} use $D_i \geq \xi_i$ for all $i=1,\dots, n$ and Chebyshev's inequality to obtain
\begin{align*}
P\left( \sum_{i=1}^{n} \min\{D_{i}, 1 \} < a_0 n - n^{1/2+\epsilon} \right) &\leq \frac{1}{P(\mathcal{D}_n)} P\left( \sum_{i=1}^n (a_0 - \min\{ \xi_i, 1\}) > n^{1/2+\epsilon} \right) \\
&\leq \frac{n \var(\min\{ \xi_1, 1\})}{P(\mathcal{D}_n) n^{1+2\epsilon}} = O\left( n^{-2\epsilon} \right),
\end{align*}
while the union bound gives that \eqref{eq:twoMaximums} is bounded by
\begin{equation*}
 P\left( \max\left\{ \sum_{i=n - k_n+ 1}^n  N^{(i)}  , \,  (N+D)^{(n)}  \right\}   >  b_n \right) \leq P\left(  \sum_{i=n - k_n + 1}^n  N^{(i)} > b_n \right) + P\left( (N+D)^{(n)} > b_n \right),
\end{equation*}
where $b_n = a_0 n - n^{1/2+\epsilon}$. For the second probability the union bound again gives
\begin{align*}
P\left( (N+D)^{(n)} > b_n \right) &\leq P\left( N^{(n)} > b_n/2 \right) + P\left( D^{(n)} > b_n/2 \right) \\
&\leq \frac{n}{P(\mathcal{D}_n)} \left( P(\gamma_1 +\tau_1> b_n/2) + P( \xi_1 + \chi_1> b_n/2) \right) \\
&\leq \frac{n}{P(\mathcal{D}_n)} \left(  (b_n/2-1)^{-\alpha} L_F(b_n/2-1) + (b_n/2-1)^{-\beta} L_G(b_n/2-1) \right) \\
&= O\left( n^{-\alpha+1} L_F(n) + n^{-\beta+1} L_G(n)  \right) = o(1)
\end{align*}
as $n \to \infty$. Finally, by Markov's inequality and Lemma \ref{L.SumsOrderStats},
\begin{align*} 
P\left(  \sum_{i=n - k_n + 1}^n  N^{(i)} > b_n \right) &\leq \frac{1}{b_n}  \sum_{i=n - k_n + 1}^n  E\left[ N^{(i)} \right] \leq \frac{1}{b_n P(\mathcal{D}_n)} \sum_{i=n-k_n+1}^n E[ \gamma^{(i)} + 1] \\
&\leq \frac{1}{b_n P(\mathcal{D}_n)} \left(  \int_0^\infty \min\left\{ n \overline{F}(x), k_n \right\} dx + k_n \right) \\
&= a_0^{-1} (1 + o(1)) \int_0^\infty \min\left\{ \overline{F}(x), n^{(1+\epsilon)/\beta - 1} \right\} dx + o(1)  \\
&\leq a_0^{-1} (1+o(1)) \left( n^{(1+\epsilon)/\beta-1} + \int_1^\infty \min\left\{ K x^{-\alpha+\epsilon} , n^{(1+\epsilon)/\beta-1} \right\} dx \right) + o(1) \\
&= o(1) + O\left( \int_1^\infty \min\left\{ x^{-\alpha+\epsilon}, n^{(1+\epsilon)/\beta-1} \right\} dx \right)
\end{align*}
as $n \to \infty$, where $K = \sup_{t \geq 1} t^{-\epsilon} L_F(t) < \infty$. Since
\begin{align*}
\int_1^\infty \min\left\{ x^{-\alpha+\epsilon}, n^{(1+\epsilon)/\beta-1} \right\} dx &= n^{(1+\epsilon)/\beta-1} (n^{(\beta-1-\epsilon)/(\beta(\alpha-\epsilon))}-1) + \int_{n^{(\beta-1-\epsilon)/(\beta(\alpha-\epsilon))} }^\infty x^{-\alpha+\epsilon} dx \\
&= O\left( n^{-(\beta-1-\epsilon)(\alpha-1-\epsilon)/(\beta(\alpha-\epsilon))} \right) = o(1),
\end{align*}
the proof is complete.
\end{proof}

The last two proofs of this section are those of Theorem \ref{T.WeakConvergence} and Proposition \ref{P.MainConditions}.

\begin{proof}[Proof of Theorem \ref{T.WeakConvergence}]
Let $u: \mathbb{N}^{r+s} \to [-M, M]$, $M > 0$, be a continuous bounded function, and let $\Delta_n, \mathcal{D}_n$ be defined as in Lemma \ref{L.DifferenceLimit}. Then,
\begin{align}
&\left| E\left[ u(N_{i_1}, \dots, N_{i_r}, D_{j_1}, \dots, D_{j_s}) \right] - E\left[ u(\eta_1, \dots, \eta_r, \xi_1, \dots, \xi_s) \right] \right| \notag \\
&= \left|  E\left[ u(\gamma_{i_1}+\tau_{i_1}, \dots, \gamma_{i_r}+\tau_{i_r}, \xi_{j_1} + \chi_{j_1}, \dots, \xi_{j_s} + \chi_{j_s}) | \mathcal{D}_n \right] - E\left[ u(\gamma_{i_1}, \dots, \gamma_{i_r}, \xi_{j_1}, \dots, \xi_{j_s}) \right]   \right| \notag \\
&\leq  \left|   E\left[ u(\gamma_{i_1}+\tau_{i_1}, \dots, \gamma_{i_r}+\tau_{i_r}, \xi_{j_1} + \chi_{j_1}, \dots, \xi_{j_s} + \chi_{j_s}) -u(\gamma_{i_1}, \dots, \gamma_{i_r}, \xi_{j_1}, \dots, \xi_{j_s}) | \mathcal{D}_n \right]   \right| \label{eq:noChi} \\
&\hspace{5mm} + \left|   E\left[ u(\gamma_{i_1}, \dots, \gamma_{i_r}, \xi_{j_1}, \dots, \xi_{j_s}) | \mathcal{D}_n \right] - E\left[ u(\gamma_{1}, \dots, \gamma_{r}, \xi_{1}, \dots, \xi_{s}) \right]   \right|. \label{eq:Conditional}
\end{align}
Let $T = \sum_{t=1}^r \tau_{i_t} + \sum_{t=1}^s \chi_{j_s}$. Since $u$ is bounded then \eqref{eq:noChi} is smaller than or equal to
\begin{align*}
&E\left[ \left. \left| u(\gamma_{i_1}+\tau_{i_1}, \dots, \gamma_{i_r}+\tau_{i_r}, \xi_{j_1} + \chi_{j_1}, \dots, \xi_{j_s} + \chi_{j_s}) -u(\gamma_{i_1}, \dots, \gamma_{i_r}, \xi_{j_1}, \dots, \xi_{j_s}) \right| 1\left( T \geq 1 \right) \right| \mathcal{D}_n \right] \\
&\leq 2M P\left( \left. T \geq 1 \right| \mathcal{D}_n \right) \leq 2M \left( \sum_{t = 1}^r P( \tau_{i_t} = 1 | \mathcal{D}_n)  + \sum_{t=1}^s P(\chi_{j_t} = 1| \mathcal{D}_n) \right) \\
&= \frac{2M}{P(\mathcal{D}_n)} \left( \sum_{t=1}^r E[ 1(\tau_{i_t} = 1, \mathcal{D}_n)] + \sum_{t=1}^s  E[ 1( \chi_{j_t} = 1, \mathcal{D}_n) ] \right).
\end{align*}
To compute the last expectations let $\mathcal{F}_n = \sigma(\gamma_1, \dots, \gamma_n, \xi_1, \dots, \xi_n)$ and note that
$$E[ 1(\chi_{j_t} = 1, \mathcal{D}_n) ] = E[ 1(\mathcal{D}_n) E[1(\chi_{j_t} = 1) | \mathcal{F}_n] ] = E\left[ 1(\mathcal{D}_n, \Delta_n \geq 0) \frac{\binom{n-1}{\Delta_n-1}}{\binom{n}{\Delta_n}}  \right] = E\left[ 1(\mathcal{D}_n, \Delta_n \geq 0) \frac{\Delta_n}{n}  \right],$$
and symmetrically, 
$$E[ 1(\tau_{i_t} = 1, \mathcal{D}_n) ] = E\left[ 1(\mathcal{D}_n, \Delta_n < 0) \frac{|\Delta_n|}{n}  \right],$$
from where it follows that \eqref{eq:noChi} is bounded by
$$2M \left( \sum_{t=1}^r E\left[ \left. \frac{\Delta_n}{n} 1(\Delta_n \geq 0) \right| \mathcal{D}_n \right] + \sum_{t=1}^s E\left[ \left. \frac{|\Delta_n|}{n} 1(\Delta_n < 0) \right| \mathcal{D}_n \right]  \right) \leq 2M (r+s) n^{-\kappa+\delta_0} = o(1)$$
as $n \to \infty$. To analyze \eqref{eq:Conditional} we first note that by Lemma \ref{L.DifferenceLimit}, $P(\mathcal{D}_n) \to 1$ as $n \to \infty$, hence
\begin{align*}
E\left[ u(\gamma_{i_1}, \dots, \gamma_{i_r}, \xi_{j_1}, \dots, \xi_{j_s}) | \mathcal{D}_n \right] &= \frac{1}{P(\mathcal{D}_n)} E\left[ u(\gamma_{1}, \dots, \gamma_r, \xi_{1}, \dots, \xi_s) 1(\mathcal{D}_n) \right] \\
&= E\left[ u(\gamma_{1}, \dots, \gamma_r, \xi_{1}, \dots, \xi_s) 1(\mathcal{D}_n) \right] + o(1).
\end{align*}
Therefore, \eqref{eq:Conditional} is equal to
\begin{align*}
\left| E\left[ u(\gamma_{1}, \dots, \gamma_r, \xi_{1}, \dots, \xi_s) 1(\mathcal{D}_n^c) \right] + o(1) \right| \leq M P(\mathcal{D}_n^c) + o(1) \to 0
\end{align*}
as $n \to \infty$, which completes the proof. 
\end{proof}

\begin{proof}[Proof of Proposition \ref{P.MainConditions}]
Fix $\epsilon > 0$ and let $\mathcal{D}_n = \{ |\Delta_n| \leq n^{1-\kappa+\delta_0}\}$. For the first limit fix $i, j = 0, 1, 2, \dots$ and note that by the union bound,
\begin{align*}
&P\left( \left| \frac{1}{n} \sum_{k=1}^n 1(N_k = i, D_k = j) - f_i g_j \right| > \epsilon \right) \\
&\leq P\left( \left. \left| \frac{1}{n} \sum_{k=1}^n (1(\gamma_k + \tau_k = i, \xi_k + \chi_k = j) - 1(\gamma_k = i, \xi_k = j)) \right| > \epsilon/2 \right| \mathcal{D}_n \right) \\
&\hspace{5mm} + P \left( \left. \left| \frac{1}{n} \sum_{k=1}^n 1(\gamma_k = i, \xi_k = j) - f_i g_j \right| > \epsilon/2 \right| \mathcal{D}_n \right) \\
&\leq P\left( \left. \frac{1}{n} \sum_{k=1}^n \left| 1(\gamma_k + \tau_k = i, \xi_k + \chi_k = j) - 1(\gamma_k = i, \xi_k = j)) \right| > \epsilon/2 \right| \mathcal{D}_n \right) \\
&\hspace{5mm}  + \frac{1}{P(\mathcal{D}_n) n (\epsilon/2)^2} \var( 1(\gamma_1 = i, \xi_1 = j) ),
\end{align*}
where in the last step we used Chebyshev's inequality. Clearly, $\var(1(\gamma_1 = i, \xi_1 =j) ) = f_i g_j (1-f_i g_j)$, and since by Lemma \ref{L.DifferenceLimit} $P(\mathcal{D}_n) \to 1$ as $n \to \infty$, then the second term converges to zero. To analyze the first term note that at most one of $\chi_k$ or $\tau_k$ can be one, hence,
\begin{align*}
& P\left( \left. \frac{1}{n} \sum_{k=1}^n \left| 1(\gamma_k + \tau_k = i, \xi_k + \chi_k = j) - 1(\gamma_k = i, \xi_k = j)) \right| > \epsilon/2 \right| \mathcal{D}_n \right)  \\
&\leq  P\left( \left. \frac{1}{n} \sum_{k=1}^n \left( \left|  1(\xi_k + \chi_k = j) - 1(\xi_k = j)  \right| +  \left| 1(\gamma_k + \tau_k = i) - 1(\gamma_k = i)  \right| \right) > \epsilon/2 \right| \mathcal{D}_n \right) \\
&\leq P\left( \left. \frac{1}{n} \sum_{k=1}^n \left( 1(\chi_k = 1) + 1(\tau_k = 1)  \right) > \epsilon/2 \right| \mathcal{D}_n \right)  \\
&= P\left( \left. \frac{|\Delta_n|}{n} > \epsilon/2 \right| \mathcal{D}_n \right)  \\
&\leq 1(n^{-\kappa+\delta_0} > \epsilon/2) \to 0 
\end{align*}
as $n \to \infty$. 

Next, for the average degrees we have
\begin{align} 
P\left( \left| \frac{1}{n} \sum_{i=1}^n N_i - E[\gamma_1] \right| > \epsilon \right) &= P\left( \left.  \left| \frac{1}{n} \sum_{i=1}^n (\gamma_i+ \tau_i) - E[\gamma_1] \right| > \epsilon \right| \mathcal{D}_n \right) \notag \\
&\leq P\left( \left. \left| \frac{1}{n} \sum_{i=1}^n \gamma_i - E[\gamma_1] \right| + \frac{|\Delta_n|}{n}  > \epsilon \right| \mathcal{D}_n \right) \notag \\
&\leq \frac{1}{P(\mathcal{D}_n)} P\left( \left| \frac{1}{n} \sum_{i=1}^n \gamma_i - E[\gamma_1] \right| + n^{-\kappa+\delta_0}  > \epsilon  \right)  ,  \label{eq:in-degreeWL}
\end{align}
symmetrically,
\begin{align}
P\left( \left| \frac{1}{n} \sum_{i=1}^n D_i - E[\xi_1] \right| > \epsilon \right) &\leq \frac{1}{P(\mathcal{D}_n)} P\left( \left| \frac{1}{n} \sum_{i=1}^n \xi_i  - E[\xi_1] \right| + n^{-\kappa+\delta_0} > \epsilon   \right) , \label{eq:out-degreeWL}
\end{align}
and since $\tau_i \chi_i = 0$ for all $1 \leq i \leq n$, 
\begin{align}
P\left( \left| \frac{1}{n} \sum_{i=1}^n N_i D_i - E[\gamma_1 \xi_1] \right| > \epsilon \right) &= P\left( \left. \left| \frac{1}{n} \sum_{i=1}^n (\gamma_i \xi_i + \tau_i \xi_i + \gamma_i \chi_i - E[\gamma_1 \xi_1] \right|   > \epsilon \right| \mathcal{D}_n \right) \notag \\
&\leq P\left( \left. \left| \frac{1}{n} \sum_{i=1}^n \gamma_i \xi_i  - E[\gamma_1 \xi_1] \right| + \sum_{i=1}^n ( \tau_i \xi_i + \gamma_i \chi_i)    > \epsilon \right| \mathcal{D}_n \right) \notag \\
&\leq \frac{1}{P(\mathcal{D}_n)} P\left(  \left| \frac{1}{n} \sum_{i=1}^n \eta_i \xi_i  - E[\gamma_1 \xi_1] \right|  + n^{-\kappa+\delta} > \epsilon   \right) \label{eq:mix-degreesWL} \\
&\hspace{5mm} +   P\left( \left. \frac{1}{n} \sum_{i=1}^n ( \tau_i \xi_i + \gamma_i \chi_i) > n^{-\kappa+\delta} \right| \mathcal{D}_n  \right), \label{eq:chiANDeta}
\end{align}
for any $\delta_0 < \delta < \kappa$. By Lemma \ref{L.DifferenceLimit}, $P(\mathcal{D}_n)$ converges to one, and by the Weak Law of Large Numbers (WLLN) we have that each of \eqref{eq:in-degreeWL}, \eqref{eq:out-degreeWL} and \eqref{eq:mix-degreesWL} converges to zero as $n \to \infty$, as required. To see that \eqref{eq:chiANDeta} converges to zero use Markov's inequality to obtain 
\begin{align}
P\left( \left. \frac{1}{n} \sum_{i=1}^n ( \tau_i \xi_i + \gamma_i \chi_i)  > n^{-\kappa+\delta} \right| \mathcal{D}_n  \right) \leq \frac{E[ \tau_1 \xi_1 + \gamma_1 \chi_1 | \mathcal{D}_n]}{n^{-\kappa+\delta}} = \frac{E[ (\tau_1 \xi_1 + \gamma_1 \chi_1) 1(\mathcal{D}_n)]}{P(\mathcal{D}_n) n^{-\kappa+\delta}}. \label{eq:mixedProducts}
\end{align}
Now let $\mathcal{F}_n = \sigma(\gamma_1, \dots, \gamma_n, \xi_1, \dots, \xi_n)$ to compute
$$E[ (\tau_1 \xi_1 + \gamma_1 \chi_1) 1(\mathcal{D}_n)] = E[ (\xi_1 E[\tau_1 | \mathcal{F}_n] + \gamma_1E[\chi_1| \mathcal{F}_n]) 1(\mathcal{D}_n) ] \leq E\left[ \left( \xi_1 + \gamma_1  \right) \frac{|\Delta_n|}{n} 1(\mathcal{D}_n) \right] \leq 2\mu n^{-\kappa+\delta_0},$$
which implies that \eqref{eq:mixedProducts} converges to zero. 

Finally, provided that $E[\gamma_1^2 + \xi_1^2] < \infty$, the WLLN combined with the arguments used to bound \eqref{eq:chiANDeta} give
\begin{align*}
P\left( \left| \frac{1}{n} \sum_{i=1}^n N_i^2 - E[\gamma_1^2] \right| > \epsilon \right) &\leq \frac{1}{P(\mathcal{D}_n)} P\left( \left| \frac{1}{n} \sum_{i=1}^n \gamma_i^2 - E[\gamma_1^2] \right| +  \frac{1}{n} \sum_{i=1}^n (2\gamma_i \tau_i + \tau_i^2) > \epsilon, \mathcal{D}_n \right)  \\
&\leq  \frac{1}{P(\mathcal{D}_n)} P\left( \left| \frac{1}{n} \sum_{i=1}^n \gamma_i^2 - E[\gamma_1^2] \right| + n^{-\kappa+\delta} > \epsilon \right) \\
&\hspace{5mm} +  P\left( \left.  \frac{1}{n} \sum_{i=1}^n (2\gamma_i \tau_i + \tau_i^2) > n^{-\kappa+\delta} \right| \mathcal{D}_n \right) \\
&\leq o(1) + \frac{E[ (2\gamma_1 +1)\tau_1| \mathcal{D}_n]}{n^{-\kappa+\delta}} \\
&\leq o(1) + \frac{E[2\gamma_1+1]}{P(\mathcal{D}_n) n^{\delta-\delta_0}},
\end{align*}
and symmetrically, 
$$P\left( \left| \frac{1}{n} \sum_{i=1}^n D_i^2 - E[\xi_1^2] \right| > \epsilon \right) \to 0,$$
as $n \to \infty$. 
\end{proof}

\subsection{Configuration Model} \label{SS.ConfigurationModelProofs}

This subsection contains the proofs of Proposition \ref{P.Uniformity}, which establishes the uniformity of simple graphs, Propositions \ref{P.PoissonLimit} and \ref{P.RepeatedDistr}, which concern the repeated directed configuration model, and Proposition \ref{P.ErasedDistr} which refers to the erased directed configuration model. 

\begin{proof}[Proof of Proposition \ref{P.Uniformity}]
Suppose ${\bf m}$ and ${\bf d}$ have equal sum $l_n$, and number the inbound and outbound half-edges by $1, 2 , \dots, l_n$. The process of matching half edges in the configuration model is equivalent to a permutation $(p(1), p(2), \dots, p(l_n))$ of the numbers $(1, 2, \dots, l_n)$ where we pair the $i$th inbound half-edge to the $p(i)$th outbound half-edge, with all $l_n!$ permutations being equally likely. Note that different permutations can actually lead to the same graph, for example, if we switch the position of two outbound half-edges of the same node, so not all multigraphs have the same probability. Nevertheless, a simple graph can only be produced by $\prod_{i=1}^n d_i! m_i!$ different permutations; to see this note that for each node $v_i$, $i = 1, \dots, n$, we can permute its $m_i$ inbound half-edges and its $d_i$ outbound half-edges without changing the graph. It follows that since the number of permutations leading to a simple graph is the same for all simple graphs, then conditional on the resulting graph being simple, it is uniformly chosen among all simple graphs having bi-degree-sequence $({\bf m}, {\bf d})$.
\end{proof}

Next, we give the proofs of the results related to the repeated directed configuration model. Before proceeding with the proof of Proposition \ref{P.PoissonLimit} we give the following preliminary lemma, which will be used to establish that under Condition \ref{Cond.RegularityDirected} the maximum in- and out-degrees cannot grow too fast.

\begin{lemma} \label{L.MaximumGrowth}
Let $\left\{ a_{nk}: 1 \leq k \leq n, n \in \mathbb{N} \right\}$ be a triangular array of nonnegative integers, and suppose there exist nonnegative numbers $\{ p_j: j \in \mathbb{N} \cup \{0\}\}$ such that $\sum_{j=0}^\infty p_j = 1$,
\[
\lim_{n \to \infty} \frac{1}{n} \sum_{k=1}^n 1(a_{nk} = j) = p_j, \quad \text{for all }  j \in \mathbb{N} \cup \{0\}  \qquad \text{and} \qquad \lim_{n\rightarrow\infty} \frac{1}{n}\sum_{k=1}^n a_{nk} = \sum_{j=0}^\infty j p_j < \infty.
\]
Then, 
\[ \lim_{n\rightarrow \infty}\max_{1\leq k\leq n}\frac{a_{nk}}{n}= 0. \]
\end{lemma}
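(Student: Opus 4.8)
The plan is to read the hypotheses as a uniform-integrability statement: the empirical law of $\{a_{nk}\}_{k=1}^n$ converges pointwise to $\{p_j\}$, and the empirical first moments converge to the first moment $\mu=\sum_{j\geq0}jp_j$ of the limiting law. Since no mass can escape to infinity, the single largest entry $M_n=\max_{1\leq k\leq n}a_{nk}$ must be controlled by the tail of the empirical mean, which can be made uniformly small. I would carry this out by truncation.

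First I would fix a truncation level $T\in\mathbb{N}$ and split the empirical mean according to whether $a_{nk}\leq T$ or $a_{nk}>T$. For the truncated part, since $\frac1n\sum_{k=1}^n 1(a_{nk}=j)\to p_j$ for each fixed $j$ and the sum over $0\leq j\leq T$ is \emph{finite}, I get $\frac1n\sum_{k=1}^n a_{nk}1(a_{nk}\leq T)=\sum_{j=0}^T j\cdot\frac1n\sum_{k=1}^n 1(a_{nk}=j)\to\sum_{j=0}^T jp_j$ as $n\to\infty$. Subtracting this from the assumed convergence $\frac1n\sum_{k=1}^n a_{nk}\to\mu$ gives
\[
\lim_{n\to\infty}\frac1n\sum_{k=1}^n a_{nk}\,1(a_{nk}>T)=\mu-\sum_{j=0}^T jp_j=\sum_{j>T}jp_j=:r_T.
\]
The key observation is then that, because the $a_{nk}$ are nonnegative and the maximal entry is itself one of the summands whenever $M_n>T$, one has $\frac{M_n}{n}1(M_n>T)\leq\frac1n\sum_{k=1}^n a_{nk}1(a_{nk}>T)$, and hence the crude bound $\frac{M_n}{n}\leq\frac{T}{n}+\frac1n\sum_{k=1}^n a_{nk}1(a_{nk}>T)$, valid for every $n$ and every $T$.

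To conclude I would fix $T$ and let $n\to\infty$: the term $T/n\to0$ and the tail sum converges to $r_T$, so $\limsup_{n\to\infty}M_n/n\leq r_T$. Since $\mu<\infty$, the tail $r_T=\sum_{j>T}jp_j$ of a convergent series tends to $0$ as $T\to\infty$, and letting $T\to\infty$ forces $\limsup_{n\to\infty}M_n/n\leq0$, i.e. $M_n/n\to0$. The only step requiring genuine care is the truncation identity establishing the tail $r_T$ — this is precisely where the convergence of the full means (and not merely of the empirical distribution) is used, ruling out the escape of mass to infinity; everything after that is elementary, so I do not expect a serious obstacle beyond keeping the truncation bookkeeping clean.
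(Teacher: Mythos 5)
Your proof is correct, and it takes a genuinely more elementary route than the paper's. The paper proves the same tail estimate probabilistically: it realizes the empirical law and the limit law as random variables $X_n = F_n^{-1}(U)$ and $X = F^{-1}(U)$ coupled through a single uniform $U$, deduces $X_n \to X$ almost surely, and then combines the assumed convergence of means with Fatou's lemma to conclude $E\left[ X_n 1(X_n > \sqrt{n}) \right] \to 0$, i.e.\ it controls the tail of the empirical mean above an $n$-dependent truncation level $\sqrt{n}$. You instead truncate at a fixed level $T$, compute the limit of the truncated empirical mean exactly (a finite sum, so the interchange of limit and sum is trivial), subtract it from the full mean to identify the tail limit $r_T = \sum_{j>T} j p_j$, and only then send $T \to \infty$, using that $r_T \to 0$ because $\sum_j j p_j < \infty$. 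Both arguments hinge on the same insight --- convergence of the means together with convergence of the empirical distribution forbids mass from escaping to infinity, and the maximum divided by $n$ is dominated by the tail of the empirical mean --- and both finish with the identical splitting $\max_k a_{nk}/n \leq (\text{truncation level})/n + (\text{tail sum})/n$. What your version buys is the complete elimination of the probabilistic machinery (no coupling, no almost-sure convergence of quantile functions, no Fatou), at the modest cost of an iterated limit ($n \to \infty$ first, then $T \to \infty$) in place of the paper's single moving truncation; it is, if anything, cleaner and more self-contained than the published argument.
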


\begin{proof} 
Define 
\begin{align*}
F(x) = \sum_{j = 0}^{\lfloor x \rfloor} p_j \qquad \text{and} \qquad F_n(x) = \frac{1}{n} \sum_{k=1}^n 1(a_{nk} \leq x)
\end{align*}
and note that $F$ and $F_n$ are both distribution functions with support on the nonnegative integers. Define the pseudoinverse operator $h^{-1}(u) = \inf\{ x \geq 0: u \leq h(x) \}$ and let
$$X_n = F_n^{-1}(U) \qquad \text{and} \qquad X = F^{-1}(U),$$
where $U$ is a Uniform(0,1) random variable. It is easy to verify that $X_n$ and $X$ have distributions $F_n$ and $F$, respectively. Furthermore, the assumptions imply that
$$X_n \to X \qquad \text{a.s.}$$
as $n \to \infty$ and
$$E[X_n] = \sum_{j=0}^\infty j \frac{1}{n} \sum_{k=1}^n 1(a_{nk} = j) = \frac{1}{n} \sum_{k=1}^n \sum_{j=0}^\infty j 1(a_{nk} = j) = \frac{1}{n} \sum_{k=1}^n a_{nk} \to E[X]$$
as $n \to \infty$, where the exchange of sums is justified by Fubini's theorem. Now note that by Fatou's lemma,
$$\liminf_{n \to \infty} E[ X_n 1(X_n \leq \sqrt{n}) ] \geq E\left[ \liminf_{n \to \infty} X_n 1(X_n \leq \sqrt{n}) \right] = E[X],$$
which implies that
$$\lim_{n \to \infty} E[ X_n 1(X_n > \sqrt{n}) ] = 0.$$
Finally, 
$$E[ X_n 1(X_n \geq n)] = \sum_{j= \lfloor \sqrt{n} \rfloor +1}^\infty j \frac{1}{n} \sum_{k=1}^n 1(a_{nk} = j) = \frac{1}{n} \sum_{k=1}^n \sum_{j = \lfloor \sqrt{n} \rfloor +1}^\infty j 1(a_{nk} = j) = \frac{1}{n} \sum_{k=1}^n a_{nk} 1(a_{nk} > \sqrt{n}),$$ 
from where it follows that
$$\lim_{n \to \infty} \max_{1 \leq k \leq n} \frac{a_{nk} 1(a_{nk} > \sqrt{n})}{n} = 0,$$
which in turn implies that
$$\lim_{n \to \infty} \max_{1 \leq k \leq n} \frac{a_{nk}}{n} \leq \lim_{n\to\infty} \left( \frac{\sqrt{n}}{n} + \max_{1 \leq k \leq n} \frac{a_{nk} 1(a_{nk} > \sqrt{n})}{n} \right) = 0.$$
\end{proof}

\begin{proof}[Proof of Proposition \ref{P.PoissonLimit}]
Following the proof of Proposition 7.9 in \cite{van2009random}, we define the random variable $\tilde M_n$ to be the total number of pairs of multiple edges in the same direction, e.g., if from node $v_i$ to node $v_j$ there are $k \geq 2$ edges, their contribution to $\tilde M_n$ is $\binom{k}{2}$. Note that $M_n \leq \tilde M_n$, with strict inequality whenever there is at least one pair of nodes having three or more multiple edges in the same direction. We claim that $\tilde M_n - M_n \stackrel{P}{\longrightarrow} 0$ as $n \to \infty$, which implies that 
$$\text{if} \qquad (S_n, \tilde M_n) \Rightarrow (S, M), \qquad \text{then} \qquad (S_n, M_n) \Rightarrow (S,M)$$
as $n \to \infty$. To prove the claim start by defining indicator random variables for each of the possible self-loops and multiple edges in the same direction that the multigraph can have. For the self-loops we use the notation ${\bf u} = (r,t,i)$ to define
\begin{align*}
I_{\bf u} := 1(\text{self-loop from the $r$th outbound stub to the $t$th inbound stub of node $v_i$}),
\end{align*}
and for the pairs of multiple edges in the same direction we use ${\bf w} = (r_1, t_1, r_2, t_2, i,j)$ to define
\begin{align*}
J_{\bf w} &:= 1(\text{$r_s$th outbound stub of node $v_i$ paired to $t_s$th inbound stub of node $v_j$, $s = 1,2$}).
\end{align*}
The sets of possible vectors ${\bf u}$ and ${\bf w}$ are given by
\begin{align*}
\mathcal{I} &=\{(r,t,i):1\leq i\leq n ,1\leq r\leq d_{ni},1\leq t\leq m_{ni}\}, \quad \text{and} \\
\mathcal{J} &=\{(r_{1},t_{1},r_{2},t_{2},i,j):
1\leq i\neq j\leq n,1\leq r_{1}<r_{2}\leq d_{ni},1\leq t_{1}\neq t_{2}\leq m_{nj}\},
\end{align*}
respectively. It follows from this notation that
$$S_n = \sum_{{\bf u} \in \mathcal{I}} I_{\bf u} \qquad \text{and} \qquad \tilde M_n = \sum_{{\bf w} \in \mathcal{J}} J_{\bf w}.$$

Next, note that by the union bound, 
\begin{align*}
P\left( \tilde M_n - M_n \geq 1 \right) &\leq P\left( \text{at least two nodes with three or more edges in the same direction}  \right) \\
&\leq \sum_{1 \leq i \neq j \leq n} P\left( \text{three or more edges from node $v_i$ to node $v_j$} \right) \\
&\leq \sum_{1 \leq i \neq j \leq n} \frac{d_{ni}(d_{ni}-1)(d_{ni}-2)m_{nj}(m_{nj}-1)(m_{nj}-2)}{l_n(l_n-1)(l_n-2)} \\
&\leq \left( \frac{1}{\sqrt{n}} \max_{1\leq i \leq n} d_{ni} \right) \left( \frac{1}{\sqrt{n}} \max_{1 \leq j \leq n} m_{nj} \right) \left(\frac{n}{l_n-2}\right)^3 \cdot \frac{1}{n} \sum_{i=1}^n d_{ni}^2 \cdot \frac{1}{n} \sum_{j=1}^n m_{nj}^2 \\
&= o(1)
\end{align*}
as $n \to \infty$, where for the last step we used Condition \ref{Cond.RegularityDirected} and Lemma \ref{L.MaximumGrowth}.  It follows that $\tilde M_n - M_n \stackrel{P}{\longrightarrow} 0$ as claimed.

We now proceed to prove that $(S_n, \tilde M_n) \Rightarrow (S, M)$, where $S$ and $M$ are independent Poisson random variables with means $\lambda_1$ and $\lambda_2$, respectively.  To do this we use Theorem 2.6 in \cite{van2009random} which says that if for any $p, q \in \mathbb{N}$
$$\lim_{n \to \infty} E\left[ (S_n)_p (\tilde M_n)_q \right] = \lambda_1^p \lambda_2^q,$$
where $(X)_r = X (X-1) \cdots (X-r+1)$, then $(S_n, \tilde M_n) \Rightarrow (S,M)$ as $n \to \infty$. To compute the expectation we use Theorem 2.7 in \cite{van2009random}, which gives
\begin{equation} \label{eq:sum_prob_indicator}
E\left[ (S_n)_p (\tilde M_n)_q \right] = \sum_{{\bf u}_1, \dots, {\bf u}_p \in \mathcal{I}} \sum_{{\bf w}_1, \dots, {\bf w}_q \in \mathcal{J}} P\left(I_{ {\bf u}_{1}} = \dots = I_{ {\bf u}_{p}} = J_{ {\bf w}_{1}} = \dots = J_{ {\bf w}_{q}} = 1\right),
\end{equation}
where the sums are taken over all the $p$-permutations, respectively $q$-permutations, of the distinct indices in $\mathcal{I}$, respectively $\mathcal{J}$. 

Next, by the fact that all stubs are uniformly paired, we have that
\begin{equation*}
P\left(I_{ {\bf u}_{1}} = \dots = I_{ {\bf u}_{p}} = J_{ {\bf w}_{1}} = \dots = J_{ {\bf w}_{q}} = 1\right),
=\frac{1}{\prod_{i=0}^{p+2q-1}(l_{n}-i)}
\end{equation*}
unless there is a conflict in the attachment rules, i.e., one stub is required to pair with two or more different stubs within the indices
$\{ {\bf u}_1,\dots,{\bf u}_{p}\}$ and $\{ {\bf w}_1,\dots,{\bf w}_{q} \}$, in which case
\begin{equation}
P\left(I_{ {\bf u}_{1}} = \dots = I_{ {\bf u}_{p}} = J_{ {\bf w}_{1}} = \dots = J_{ {\bf w}_{q}} = 1\right) = 0. \label{eq:indicator_conflict}
\end{equation}
Therefore, from \eqref{eq:sum_prob_indicator} we obtain
\begin{align}
E[(S_{n})_{p}(\tilde{M}_{n})_{q}] & \leq \sum_{{\bf u}_{1},...,{\bf u}_{p}\in\mathcal{I}} \sum_{{\bf w}_{1},...,{\bf w}_{q}\in\mathcal{J}} \frac{1}{\prod_{i=0}^{p+2q-1}(l_{n}-i)} \notag  \\
 &= \frac{|\mathcal{I}| (|\mathcal{I}| - 1) \cdots (|\mathcal{I}| - p+1)  |\mathcal{J}| (|\mathcal{J}| -1) \cdots (|\mathcal{J}| - q+1) }{l_{n}(l_{n}-1)\cdots(l_{n}-(p+2q-1))} ,
 \label{eq:ub_factorialmoment}
\end{align}
where $|A|$ denotes the cardinality of set $A$. Now note that
\begin{align*}
|\mathcal{I}| &= \sum_{i=1}^n m_{ni} d_{ni} , \qquad \text{and} \\
|\mathcal{J}| &= \sum_{1\leq i\neq j\leq n} \frac{d_{ni}(d_{ni}-1)}{2} \, m_{nj}(m_{nj}-1)  \\ 
&= \frac{1}{2} \left(\sum_{i=1}^nm_{ni}(m_{ni}-1)\right) \left(\sum_{i=1}^nd_{ni}(d_{ni}-1) \right) - \frac{1}{2} \sum_{i=1}^nm_{ni}(m_{ni}-1)d_{ni}(d_{ni}-1) .
\end{align*}
By Lemma \ref{L.MaximumGrowth} and Condition \ref{Cond.RegularityDirected} we have
$$\sum_{i=1}^nm_{ni}(m_{ni}-1)d_{ni}(d_{ni}-1) \leq \left( \max_{1\leq i\leq n} m_{ni} \right) \left( \max_{1 \leq i \leq n} d_{ni} \right) \sum_{i=1}^n m_{ni} d_{ni} =  o(n^2)$$
as $n \to \infty$. Hence, it follows from Condition \ref{Cond.RegularityDirected} that
\begin{align*}
	\frac{|\mathcal{I}|}{n} &= E[\gamma\xi]+o(1), \\
	\frac{|\mathcal{J}|}{n^2} &= \frac{1}{2} E[\gamma(\gamma-1)] E[\xi(\xi-1)] + o(1), \qquad \text{and} \\
	\frac{n}{l_n} &= \frac{1}{\mu} + o(1)
\end{align*}
as $n \to \infty$. Since $p$ and $q$ remain fixed as $n \to \infty$, we have
\begin{align*}
	\limsup_{n\rightarrow \infty}E[(S_{n})_{p}(\tilde{M}_{n})_{q}] &=\left( \lim_{n\rightarrow \infty} \frac{|\mathcal{I}|}{n} \right)^{p}
	\left( \lim_{n\rightarrow \infty}\frac{|\mathcal{J}|}{n^2} \right)^{q}
	\left( \lim_{n\rightarrow \infty}\frac{n}{l_n} \right)^{p+2q}\\
&=\left({E[\gamma\xi]}\right)^{p} \left( \frac{E[\gamma(\gamma-1)] E[\xi(\xi-1)]}{2} \right)^{q} \left( \frac{1}{\mu} \right)^{p+2q}
= \lambda_1^{p}\lambda_2^{q}.
\end{align*}
To prove the matching lower bound, we note that \eqref{eq:indicator_conflict} occurs exactly when there is a conflict in the attachment rules. Each time a conflict happens, the numerator of \eqref{eq:ub_factorialmoment} decreases by one. Therefore,
\begin{align*}
E\left[ (S_n)_p (\tilde M_n)_q \right] &= \frac{|\mathcal{I}| (|\mathcal{I}| - 1) \cdots (|\mathcal{I}| - p+1)  |\mathcal{J}| (|\mathcal{J}| -1) \cdots (|\mathcal{J}| - q+1) }{l_{n}(l_{n}-1)\cdots(l_{n}-(p+2q-1))} \\
&\hspace{5mm} - \sum_{{\bf u}_{1},...,{\bf u}_{p}\in\mathcal{I}} \sum_{{\bf w}_{1},...,{\bf w}_{q}\in\mathcal{J}} \frac{1({\bf u}_1, \dots, {\bf u}_p, {\bf w}_1, \dots, {\bf w}_q \text{ have a conflict})}{\prod_{i=0}^{p+2q-1}(l_{n}-i)} \\
&= \lambda_1^p \lambda_2^q  -  \frac{1}{(\mu n)^{p+2q}} \sum_{{\bf u}_{1},...,{\bf u}_{p}\in\mathcal{I}} \sum_{{\bf w}_{1},...,{\bf w}_{q}\in\mathcal{J}} 1({\bf u}_1, \dots, {\bf u}_p, {\bf w}_1, \dots, {\bf w}_q \text{ have a conflict}) + o(1)
\end{align*}
as $n \to \infty$. To bound the total number of conflicts note that there are three possibilities:
\begin{enumerate}
\item
a stub is assigned to two different self-loops, or
\item
a stub is assigned to a self-loop and a multiple edge, or
\item
a stub is assigned to two different multiple edges.
\end{enumerate}
We now discuss each of the cases separately. For conflicts of type (a) suppose there is a conflict between the self-loops ${\bf u}_a$ and ${\bf u}_b$; the remaining $p-2$ self-loops and $q$ pairs of multiple edges can be chosen freely. Then the number of such conflicts is bounded by $|\mathcal{I}|^{p-2} |\mathcal{J}|^q = O\left( n^{p+2q-2} \right)$, hence it suffices to show that the total number of conflicting pairs $({\bf u}_a, {\bf u}_b)$ is $o(n^2)$ as $n \to \infty$. Now, to see that this is indeed the case, first choose the node $v_i$ where the conflicting pair is; if the conflict is that an outbound stub is assigned to two different inbound stubs then we can choose the problematic outbound stub in $d_{ni}$ ways and the two inbound stubs in $m_{ni}(m_{ni}-1)$ ways, whereas if the conflict is that an inbound stub is assigned to two different outbound stubs then we can choose the problematic inbound stub in $m_{ni}$ ways and the two outbound stubs in $d_{ni} (d_{ni}-1)$ ways.  Thus, the total number of conflicting pairs is bounded by
\[
\sum_{i=1}^n(d_{ni}m_{ni}^{2}+m_{ni}d_{ni}^{2}) \leq \left(\max_{1\leq i\leq n}m_{ni} + \max_{1\leq i\leq n} d_{ni}\right) 2 \sum_{i=1}^n m_{ni} d_{ni} = o(n^{3/2}) = o(n^2).
\]

For conflicts of type (b) suppose there is a conflict between the self-loop ${\bf u}_a$ and the pair of multiple edges ${\bf w}_b$; choose the remaining $p-1$ self-loops and $q-1$ multiple edges freely. Then, the number of such conflicts is bounded by $|\mathcal{I}|^{p-1} |\mathcal{J}|^{q-1} = O\left( n^{p + 2q-3} \right)$, and it suffices to show that the number of conflicting pairs $({\bf u}_a, {\bf w}_b)$ is $o(n^3)$ as $n \to \infty$. Similarly as in case (a), an outbound stub of node $v_i$ can be paired to a self-loop and a multiple edge to node $v_j$ in $d_{ni} m_{ni} m_{nj} (d_{ni}-1) (m_{nj}-1)$ ways, and an inbound stub of node $v_i$ can be paired to a self-loop and a multiple edge from node $v_j$ in $m_{ni} d_{ni} d_{nj} (m_{ni}-1) (d_{nj}-1)$ ways, and so the total number of conflicting pairs is bounded by
\begin{align*}
 \sum_{i=1}^n\sum_{j=1}^n(d_{ni}^{2}m_{ni}m_{nj}^{2}+m_{ni}^{2}d_{ni}d_{nj}^{2})\leq 
 \left(\max_{1\leq i\leq n}m_{ni}+\max_{1\leq i\leq n}d_{ni}  \right) 2 \left(\sum_{i=1}^n m_{ni}^2 \right) \left(\sum_{i=1}^n d_{ni}^2 \right)
=o(n^{5/2}) = o(n^3).
\end{align*}

Finally, for conflicts of type (c) we first fix ${\bf w}_a$ and ${\bf w}_b$ and choose freely the remaining $p$ self-loops and $q-2$ multiple edges, which can be done in less than $|\mathcal{I}|^p |\mathcal{J}|^{q-2} = O\left( n^{p+ 2q - 4} \right)$ ways. It then suffices to show that the number of conflicting pairs $({\bf w}_a, {\bf w}_b)$ is $o(n^4)$ as $n \to \infty$. A similar reasoning to that used in the previous cases gives that 
the total number of conflicting pairs is bounded by
\begin{align*}
&2\sum_{i=1}^n\sum_{j=1}^n\sum_{k=1}^n(d_{ni}^{3}m_{nj}^{2}m_{nk}^{2}+m_{ni}^{3}d_{nj}^{2}d_{nk}^{2}) \\
&\leq 2\left(\max_{1\leq i\leq n}m_{ni}+\max_{1\leq i\leq n}d_{ni}  \right)
\left(\sum_{i=1}^nd_{ni}^2\left(\sum_{i=1}^nm_{ni}^2\right)^2+\sum_{i=1}^nm_{ni}^2\left(\sum_{i=1}^nd_{ni}^2\right)^2\right)  \\
&=o(n^{7/2}) = o(n^4).
\end{align*}

We conclude that in any of the three cases the number of conflicts is negligible, which completes the proof.
\end{proof}

\begin{proof}[Proof of Proposition \ref{P.RepeatedDistr}]
Let $\mathcal{S}_n$ be the event that the resulting graph is simple, and note that the bi-degree-sequence $({\bf N}^{(r)}, {\bf D}^{(r)})$ is the same as $({\bf N}, {\bf D})$ given $\mathcal{S}_n$. 

To prove part (a) note that for any $i,j = 0, 1, 2, \dots$,
\begin{align*}
h^{(n)}(i,j) &= \frac{1}{n} \sum_{i=1}^n P( N_k = i, D_k = j | \mathcal{S}_n)  = \frac{1}{P(\mathcal{S}_n)} P(N_1 = i, D_1 = j, \mathcal{S}_n),   
\end{align*}
since the $\{ (N_k, D_k)\}_{k=1}^n$ are identically distributed. Now let $\mathcal{G}_n = \sigma(N_1, \dots, N_n, D_1, \dots, D_n)$ and condition on $\mathcal{G}_n$ to obtain
$$P( N_1 = i, D_1 = j, \mathcal{S}_n) = E[ 1(N_1 = i, D_1 = j) P(\mathcal{S}_n | \mathcal{G}_n) ], $$
from where it follows that
\begin{align*}
\left| h^{(n)}(i,j) - f_i g_j \right| &\leq \left|  \frac{E[ 1(N_1 = i, D_1 = j) (P(\mathcal{S}_n | \mathcal{G}_n) - P(\mathcal{S}_n) ) ]}{P(\mathcal{S}_n)}   \right| + \left| P(N_1 = i, D_1 = j) - f_i g_j  \right| \\
&\leq  E\left[  \left| \frac{P(\mathcal{S}_n | \mathcal{G}_n)}{P(\mathcal{S}_n)} - 1 \right| \right]  + \left| P( N_1 = i, D_1 = j) - f_i g_j \right|.
\end{align*}
Theorem \ref{T.WeakConvergence} gives that the second term converges to zero, and for the first term use Theorem \ref{T.ProbSimple} to obtain that
both $P(\mathcal{S}_n)$ and $P(\mathcal{S}_n | \mathcal{G}_n)$ converge to the same positive limit, so by dominated convergence, 
$$\lim_{n \to \infty} E\left[  \left| \frac{P(\mathcal{S}_n | \mathcal{G}_n)}{P(\mathcal{S}_n)} - 1 \right| \right] \leq E\left[  \lim_{n \to \infty} \left| \frac{P(\mathcal{S}_n | \mathcal{G}_n)}{P(\mathcal{S}_n)} - 1 \right| \right] = 0.$$

For part (b) we only show the proof for $\widehat{g_k}^{(n)}$ since the proof for $\widehat{f_k}^{(n)}$ is symmetrical. Note that $\widehat{g_k}^{(n)}$ is a quantity defined on $\mathcal{S}_n$. Fix $\epsilon > 0$ and use the union bound to obtain
\begin{align}
P\left( \left. \left| \widehat{g_k}^{(n)} - g_k \right| > \epsilon \right| \mathcal{S}_n \right) &\leq \frac{1}{P(\mathcal{S}_n)} P\left(  \left| \frac{1}{n} \sum_{i=1}^n 1(D_i = k) - g_k \right| > \epsilon  \right) \notag \\
&\leq  \frac{1}{P(\mathcal{S}_n)} P\left( \left. \frac{1}{n} \sum_{i=1}^n \left| 1(\xi_i + \chi_1 = k) - 1(\xi_i = k) \right|  > \epsilon/2 \right| \mathcal{D}_n \right) \label{eq:alreadydone} \\
&\hspace{5mm} +  \frac{1}{P(\mathcal{S}_n)P(\mathcal{D}_n)} P \left( \left| \frac{1}{n} \sum_{i=1}^n 1(\xi_i = k) - g_k \right| > \epsilon/2   \right). \label{eq:ChebyshevAgain}
\end{align}
By Theorem \ref{T.ProbSimple} and Lemma \ref{L.DifferenceLimit}, $P(\mathcal{S}_n)$ and $P(\mathcal{D}_n)$ are bounded away from zero, so we only need to show that the numerators converge to zero. The arguments are the same as those used in the proof of Proposition~\ref{P.MainConditions}; for \eqref{eq:ChebyshevAgain} use Chebyshev's inequality to obtain that
$$P \left( \left| \frac{1}{n} \sum_{i=1}^n 1(\xi_i = k) - g_k \right| > \epsilon/2   \right) \leq \frac{\var(1(\xi_1 = k))}{n(\epsilon/2)^2} = O( n^{-1} ),$$
as $n \to \infty$, and for \eqref{eq:alreadydone} 
\begin{align*}
P\left( \left. \frac{1}{n} \sum_{i=1}^n \left| 1(\xi_i + \chi_i = k) - 1(\xi_i = k) \right|  > \epsilon/2 \right| \mathcal{D}_n \right) 
&\leq P\left( \left. \frac{1}{n} \sum_{i=1}^n 1(\chi_i = 1) > \epsilon/2 \right| \mathcal{D}_n \right) \\
&\leq P\left( \left. \frac{|\Delta_n|}{n} > \epsilon/2 \right| \mathcal{D}_n \right) \leq 1(n^{-\kappa+\delta_0}  > \epsilon/2),
\end{align*}
which also converges to zero. This completes the proof. 
\end{proof}

Finally, the last result of the paper, which refers to the erased directed configuration model, is given below. Since the technical part of the proof is to show that the probability that no in-degrees or out-degrees of a fixed node are removed during the erasing procedure, we split the proof of Proposition \ref{P.ErasedDistr} into two parts. The following lemma contains the more delicate step.

\begin{lemma} \label{L.SurvivingStubs}
Consider the graph obtained through the erased directed configuration model using as bi-degree-sequence $({\bf N}, {\bf D})$, as constructed in Subsection \ref{SS.Algorithm}. Let $E^+$ and $E^-$ be the number of inbound stubs and outbound stubs, respectively, that have been removed from node $v_1$ during the erasing procedure. Then,
$$\lim_{n \to \infty} P(E^+ = 0) = 1 \qquad \text{and} \qquad \lim_{n\to\infty} P(E^- = 0) = 1.$$
\end{lemma}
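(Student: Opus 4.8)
The plan is to fix the node $v_1$ and show it loses neither an inbound nor an outbound stub with probability tending to one; I would treat $E^+$ in full and obtain the statement for $E^-$ by the symmetry that interchanges in- and out-stubs (equivalently ${\bf N}\leftrightarrow{\bf D}$, $\gamma\leftrightarrow\xi$, $\alpha\leftrightarrow\beta$). The first step is to describe exactly when an inbound stub of $v_1$ is erased: this happens precisely when either (i) an inbound stub of $v_1$ is paired to one of its own outbound stubs (a self-loop), or (ii) two distinct inbound stubs of $v_1$ are paired to outbound stubs belonging to a common node $v_j$ (a multiple edge into $v_1$ that gets merged). Thus $\{E^+\ge 1\}$ is contained in the union of these two events. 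Conditioning on the bi-degree-sequence $\mathcal{G}_n=\sigma(N_1,\dots,N_n,D_1,\dots,D_n)$, under which the pairing is a uniform matching of the $L_n$ inbound with the $L_n$ outbound stubs, a first-moment (Markov) bound gives
$$P(\text{self-loop at }v_1\mid\mathcal{G}_n)\le\frac{N_1D_1}{L_n},\qquad P(\text{merged inbound edge at }v_1\mid\mathcal{G}_n)\le\binom{N_1}{2}\frac{\sum_{j=1}^nD_j^2}{L_n(L_n-1)},$$
since a given inbound stub is matched to a prescribed outbound stub with probability $1/L_n$, and a given pair of inbound stubs of $v_1$ lands on outbound stubs of $v_j$ with probability $D_j(D_j-1)/(L_n(L_n-1))$.

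Throughout I would work on $\mathcal{D}_n$, whose probability tends to one by Lemma \ref{L.DifferenceLimit}, so that conditioning on $\mathcal{D}_n$ only costs a factor $1/P(\mathcal{D}_n)\to 1$; on $\mathcal{D}_n$ one has $L_n\ge\Xi_n=\sum_i\xi_i$ with $\Xi_n/n\to\mu$ a.s.\ by the SLLN, together with $N_1\le\gamma_1+1$ and $D_1\le\xi_1+1$. The self-loop term is then easy: since $L_n\ge\max(N_1,D_1)$ we have $N_1D_1/L_n\le\min(N_1,D_1)\le\gamma_1+1$, which is integrable and independent of $n$, while $N_1D_1/L_n\to 0$ in probability because $N_1D_1=O_P(1)$ (Theorem \ref{T.WeakConvergence}) and $L_n\to\infty$; dominated convergence yields $E[N_1D_1/L_n]\to 0$.

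The main obstacle is the multiple-edge term, because the lemma is stated under the weak hypothesis $\beta>1$, so $G$ may have infinite variance and $\sum_jD_j^2$ may grow faster than $n$. The key estimate I would establish is $W_n:=\sum_{j=1}^nD_j^2/(L_n(L_n-1))\stackrel{P}{\longrightarrow}0$. Using $D_j\le\xi_j+1$ gives $\sum_jD_j^2\le\sum_j\xi_j^2+2\Xi_n+n$, and since $L_n\ge\Xi_n\sim\mu n$, it suffices to show $n^{-2}\sum_{j=1}^n\xi_j^2\to 0$ in probability. This I would prove by truncation: fix $s\in(1,\min\{\beta,2\})$, so that $E[\xi_1^s]<\infty$ by the moment remark following \eqref{eq:powerlaw-majorant}, choose an exponent $a\in(1/s,\,1/(2-s))$ — a nonempty interval precisely because $s>1$ — and set $b_n=n^a$. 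Then the probability that some $\xi_j$ exceeds $b_n$ is at most $nP(\xi_1>b_n)\le nb_n^{-s}E[\xi_1^s]=n^{1-as}E[\xi_1^s]\to 0$, while on the complementary event $n^{-2}\sum_j\xi_j^2=n^{-2}\sum_j\xi_j^21(\xi_j\le b_n)$, whose expectation is bounded by $n^{-2}\cdot nb_n^{2-s}E[\xi_1^s]=n^{a(2-s)-1}E[\xi_1^s]\to 0$. Hence $n^{-2}\sum_j\xi_j^2\to 0$ in probability and $W_n\to 0$.

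Finally, since $N_1=O_P(1)$, the product $\binom{N_1}{2}W_n\to 0$ in probability; as it dominates the genuine conditional probability $P(\text{merged inbound edge}\mid\mathcal{G}_n)\le 1$, bounded convergence gives $E[\min\{1,\binom{N_1}{2}W_n\}]\to 0$, so the multiple-edge term vanishes. Combining the two bounds yields $P(E^+\ge 1)\to 0$, i.e.\ $P(E^+=0)\to 1$, and the identical argument with the roles of inbound and outbound stubs interchanged (using instead the finite-moment property of $F$) gives $P(E^-=0)\to 1$. The only delicate point is the truncation step controlling $\sum_j\xi_j^2$ in the infinite-variance regime $\beta\in(1,2)$; the choice of $s$ and $a$ above is what makes the two error terms simultaneously negligible.
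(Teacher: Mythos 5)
Your proof is correct, and it takes a genuinely different route from the paper's. You bound the \emph{bad} event from above: $\{E^+\ge 1\}$ is contained in the union of ``a self-loop at $v_1$'' and ``two inbound stubs of $v_1$ land on a common node,'' and you control both by first-moment/union bounds, $N_1D_1/L_n$ and $\binom{N_1}{2}\sum_{j}D_j^2/(L_n(L_n-1))$ respectively. The paper instead bounds the \emph{good} event from below: conditional on $\mathcal{G}_n$ it writes the probability that all $N_1$ inbound stubs of $v_1$ pair with outbound stubs of $N_1$ distinct nodes among $\{v_2,\dots,v_n\}$ as an explicit sum of products $\xi_{i_1}\cdots\xi_{i_{\gamma_1}}$ over tuples of distinct indices (the set $\mathcal{P}_n^+$), restricts to $\{1\le N_1\le n^\epsilon\}$, and passes to the limit via Fatou's lemma, Lemma \ref{L.DifferenceLimit} and Theorem \ref{T.WeakConvergence}. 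The trade-off sits exactly where you flagged it: the paper's computation involves only products of \emph{distinct} $\xi_i$'s, so it never needs anything beyond first moments and the heavy-tailed regime $\beta\in(1,2)$ causes no extra difficulty; your union bound over pairs introduces $\sum_j D_j^2$, which can grow super-linearly when $G$ has infinite variance, and this is why you need the truncation step. That step is correct as written: with $s\in(1,\min\{\beta,2\})$ one has $E[\xi_1^s]<\infty$ by the remark after \eqref{eq:powerlaw-majorant}, the window $a\in(1/s,1/(2-s))$ is nonempty precisely because $s>1$, and the two error terms $n^{1-as}$ and $n^{a(2-s)-1}$ both vanish, giving $n^{-2}\sum_j\xi_j^2\stackrel{P}{\longrightarrow}0$ and hence $W_n\stackrel{P}{\longrightarrow}0$; combined with $N_1=O_P(1)$, $D_1\le\xi_1+1$, $L_n\ge\Xi_n$, and bounded (or dominated) convergence, this yields $P(E^+\ge1)\to0$, and the $\mathcal{D}_n$-conditioning is handled with the same $1/P(\mathcal{D}_n)\to1$ device the paper uses. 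In short, your approach buys brevity and transparency (no combinatorial sum over $\mathcal{P}_n^+$, no delicate conditional manipulation before Fatou), at the cost of an auxiliary truncation lemma; the paper's approach avoids second-moment-type quantities altogether but requires the more involved exact computation of the no-collision probability. Both cover the full generality $\alpha,\beta>1$ claimed in the lemma.
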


\begin{proof}
We only show the result for $E^+$ since the proof for $E^-$ is symmetric. Define the set
\begin{align*}
&\mathcal{P}^+_n = \{ (i_1, \dots, i_t): 2 \leq i_1 \neq i_2 \cdots \neq i_t \leq n, \, 1 \leq t \leq n\},
\end{align*}
and note that in order for all the inbound stubs of node $v_1$ to survive the erasing procedure, it must have been that they were paired to outbound stubs of $N_1$ different nodes from $\{v_2, \dots, v_n\}$. Before we proceed it is helpful to recall some definitions from Section \ref{S.DegreeSequences}, $L_n = \sum_{i=1}^n N_i = \sum_{i=1}^n D_i $, $\Gamma_n = \sum_{i=1}^n \gamma_i$, $\Xi_n = \sum_{i=1}^n \xi_i$, $\Delta_n = \Gamma_n - \Xi_n$, and $\mathcal{D}_n = \{ |\Delta_n| \leq n^s\}$, where $s = 1-\kappa + \delta_0$; also, $\{\gamma_i\}$ and $\{\xi_i\}$ are independent sequences of i.i.d. random variables having distributions $F$ and $G$, respectively.  Now fix $0 < \epsilon < 1-s$ and let $\mathcal{G}_n = \sigma(N_1, \dots, N_n, D_1, \dots, D_n)$. Then, since $D_i = \xi_i + \chi_i \geq \xi_i$, 
\begin{align}
P\left(E^+ = 0 \right) &= E\left[ \left. P\left( E^+ = 0  \right| \mathcal{G}_n \right) \right] \geq E\left[ \left. P\left( E^+ = 0  \right| \mathcal{G}_n \right) 1(1\leq N_1 \leq n^\epsilon) \right] + P(N_1 = 0) \notag \\
&= E\left[  \frac{1(1 \leq N_1 \leq n^\epsilon) }{L_n!} \sum_{(i_1, i_2, \dots, i_{N_1}) \in \mathcal{P}^+_n} D_{i_1} D_{i_2} \cdots D_{i_{N_1}}   (L_n - N_1)!   \right]  + P(N_1 = 0) \notag \\
&\geq  E\left[  \left. \frac{1(1\leq \gamma_1+\tau_1 \leq n^\epsilon) }{L_n!} \sum_{(i_1, i_2, \dots, i_{(\gamma_1+\tau_1)}) \in \mathcal{P}^+_n} \xi_{i_1} \xi_{i_2} \cdots \xi_{i_{(\gamma_1+\tau_1)} }   (L_n - \gamma_1 - \tau_1)!  \right| \mathcal{D}_n \right]  \notag \\
&\hspace{5mm} + P(N_1 = 0) \notag \\
&\geq E\left[ \left.  \frac{1(1 \leq \gamma_1 \leq n^\epsilon) 1(\tau_1 = 0)}{(L_n)^{\gamma_1}} \sum_{(i_1, i_2, \dots, i_{\gamma_1}) \in \mathcal{P}^+_n} \xi_{i_1} \xi_{i_2} \cdots \xi_{i_{\xi_1} }  \right| \mathcal{D}_n   \right] + P(N_1 = 0). \label{eq:OutboundSurvive}
\end{align}
Next, condition on $\mathcal{F}_n = \sigma(\gamma_1, \dots, \gamma_n, \xi_1, \dots, \xi_n)$ and note that
$$P(\tau_1 = 0 | \mathcal{F}_n) = 1\left( \Delta_n \geq 0 \right)+ \frac{\Gamma_n}{\Gamma_n+ |\Delta_n|} 1(\Delta_n < 0) \geq  \frac{\Gamma_n}{\Gamma_n+|\Delta_n|}.$$
It follows that the expectation in \eqref{eq:OutboundSurvive} is equal to
\begin{align*}
&E\left[ \left. P(\tau_1 = 0 | \mathcal{F}_n) \frac{1(1\leq \gamma_1 \leq n^\epsilon) }{(L_n)^{\gamma_1}} \sum_{(i_1, i_2, \dots, i_{\gamma_1}) \in \mathcal{P}^+_n} \xi_{i_1} \xi_{i_2} \cdots \xi_{i_{\gamma_1} } \right| \mathcal{D}_n   \right] \\
&\geq E\left[ \left. \frac{\Gamma_n}{\Gamma_n+|\Delta_n|} \cdot \frac{1(1 \leq \gamma_1 \leq n^\epsilon) }{(\Gamma_n + |\Delta_n|)^{\gamma_1}} \sum_{(i_1, i_2, \dots, i_{\gamma_1}) \in \mathcal{P}^+_n} \xi_{i_1} \xi_{i_2} \cdots \xi_{i_{\gamma_1} } \right| \mathcal{D}_n    \right] \\
&\geq E\left[ \left.   \frac{1(1 \leq \gamma_1 \leq n^\epsilon) \Gamma_n}{(\Gamma_n + n^s)^{\gamma_1+1}} \sum_{(i_1, i_2, \dots, i_{\gamma_1}) \in \mathcal{P}^+_n} \xi_{i_1} \xi_{i_2} \cdots \xi_{i_{\gamma_1} } \right| \mathcal{D}_n    \right] 
\end{align*}
\begin{align*}
&= \frac{1}{P(\mathcal{D}_n)} E\left[ 1(1 \leq \gamma_1 \leq n^\epsilon) \sum_{(i_1, i_2, \dots, i_{\gamma_1}) \in \mathcal{P}^+_n} E\left[ \left. \frac{ 1(\mathcal{D}_n) \Gamma_n}{(\Gamma_n + n^s)^{\gamma_1+1}} \cdot \xi_{i_1} \xi_{i_2} \cdots \xi_{i_{\gamma_1} } \right| \gamma_1 \right]   \right] \\
&= \frac{1}{P(\mathcal{D}_n)} E\left[ 1(1 \leq \gamma_1 \leq n^\epsilon) \frac{(n-1)!}{(n-1-\gamma_1)! n^{\gamma_1}}  E\left[ \left.  \frac{ 1(\mathcal{D}_n) \Gamma_n n^{\gamma_1} }{(\Gamma_n + n^s)^{\gamma_1+1}} \cdot \xi_{1} \xi_{2} \cdots \xi_{\gamma_1} \right| \gamma_1 \right]   \right].
\end{align*}

It follows by Fatou's lemma, Lemma \ref{L.DifferenceLimit} and Theorem \ref{T.WeakConvergence} that
\begin{align*}
\liminf_{n \to \infty} P(E^+ = 0) &\geq  E\left[ 1(\gamma_1 \geq 1) \liminf_{n \to \infty} E\left[ \left.  \frac{ 1(\mathcal{D}_n) \Gamma_n n^{\gamma_1} }{(\Gamma_n + n^s)^{\gamma_1+1}} \cdot \xi_{1} \xi_{2} \cdots \xi_{\gamma_1} \right| \gamma_1 \right] \right] + P(\gamma_1 = 0).
\end{align*}
Next, define the function $u_n^+: \mathbb{N} \to [0, \infty)$ as
\begin{align*}
u_n^+(t) &= E\left[  \frac{1( |\Gamma_{n-1} + t - \Xi_{n} | \leq n^s) (\Gamma_{n-1} + t) n^t}{(\Gamma_{n-1} + t + n^s)^{t+1}} \cdot \xi_1 \xi_2 \cdots \xi_t \right], \\
\end{align*}
and note that it only remains to prove that for all $t \in \mathbb{N}$, $\liminf_{n \to \infty} u_n^+(t) = 1$.
 
Now let $0 < a < \mu$ and note that
\begin{align*}
u_n^+(t) &\geq E\left[  \frac{1( |\Gamma_{n-1} + t - \Xi_{n} | \leq n^s)  }{\mu^t } \cdot \xi_1 \xi_2 \cdots \xi_t \right] - P(\Gamma_{n-1} < an) \\
&\hspace{5mm} - E\left[  1(\Gamma_{n-1} \geq an) \left| \frac{(\Gamma_{n-1}+t) n^t}{(\Gamma_{n-1}+t + n^s)^{t+1}} - \frac{1}{\mu^t} \right| \xi_1 \xi_2 \cdots \xi_t \right].  
\end{align*} 
The SLLN and bounded convergence give $\lim_{n \to \infty} P(\Gamma_{n-1} < an) = 0$ and 
\begin{align*}
&\limsup_{n \to \infty}  E\left[  1(\Gamma_{n-1} \geq an) \left| \frac{(\Gamma_{n-1}+t) n^t}{(\Gamma_{n-1}+t + n^s)^{t+1}} - \frac{1}{\mu^t} \right| \xi_1 \xi_2 \cdots \xi_t \right] \\
&\leq E\left[ \xi_1 \xi_2 \cdots \xi_t \limsup_{n \to \infty}  \left| \frac{(\Gamma_{n-1}+t) n^t}{(\Gamma_{n-1}+t + n^s)^{t+1}} - \frac{1}{\mu^t} \right|  \right] = 0,
\end{align*}
from where it follows that
$$\liminf_{n \to \infty} u_{n}^+(t) \geq \liminf_{n\to \infty} E\left[  \frac{1(|\Gamma_{n-1} + t - \Xi_{n} | \leq n^s) }{\mu^t } \cdot \xi_1 \xi_2 \cdots \xi_t \right] . $$
The last step is to condition on $\xi_1, \xi_2 \dots, \xi_t$ and use Fatou's Lemma again to obtain
\begin{align*}
&\liminf_{n\to \infty} E\left[  \frac{1(|\Gamma_{n-1}+t - \Xi_{n} | \leq n^s) }{\mu^t } \cdot \xi_1 \xi_2 \cdots \xi_t \right]  \\
&= \liminf_{n\to \infty} E\left[  \frac{ \xi_1 \xi_2 \cdots \xi_t   }{\mu^t }  P(|\Gamma_{n-1} + t - \Xi_{n}  | \leq n^s | \xi_1, \dots, \xi_t) \right] \\
&\geq  E\left[  \frac{ \xi_1 \xi_2 \cdots \xi_t   }{\mu^t } \liminf_{n\to \infty} P( |\Gamma_{n-1} + t - \Xi_{n}| \leq n^s | \xi_1, \dots, \xi_t) \right] .
\end{align*}
Finally, by the same reasoning used in the proof of Lemma \ref{L.DifferenceLimit}, we obtain
$$\lim_{n \to \infty} P( |\Gamma_{n-1} + t - \Xi_{n}| \leq n^s | \xi_1, \dots, \xi_t) =  1 \qquad \text{a.s.}$$
Since $E[\xi_1 \xi_2 \cdots \xi_t] /\mu ^t = 1$, this completes the proof. 
\end{proof}

\begin{proof}[Proof of Proposition \ref{P.ErasedDistr}]
To prove part (a) note that since the $\{ (N_i^{(e)}, D_i^{(e)}) \}_{i=1}^n$ are identically distributed, then $h^{(n)}(i,j) = P(N_1^{(e)} = i, D_1^{(e)} = j)$. It follows that
\begin{align*}
\left| h^{(n)}(i,j) - f_i g_j \right| &\leq \left|  P(N_1^{(e)} = i, D_1^{(e)} = j) - P(N_1 = i, D_1 = j)  \right| + \left| P(N_1 = i, D_1 = j) - f_i g_j \right| .
\end{align*}
By Theorem \ref{T.WeakConvergence} we have that $\left| P(N_1 = i, D_1 = j) - f_i g_j \right| \to 0$, as $n \to \infty$, and for the remaining term note that
\begin{align}
&\left|  P(N_1^{(e)} = i, D_1^{(e)} = j) - P(N_1 = i, D_1 = j)  \right| \notag \\
&\leq E\left[ \left| 1(N_1^{(e)} = i, D_1^{(e)} = j) - 1(N_1 = i, D_1 = j) \right| \right] \notag \\
&\leq E\left[ \left| 1(D_1^{(e)} = j) - 1(D_1 = j) \right| \right]  + E \left[ \left| 1(N_1^{(e)} = i) - 1(N_1 = i) \right| \right] . \label{eq:ErasedDifferences} 
\end{align}
To bound the expectations in \eqref{eq:ErasedDifferences} let $E^+$ and $E^-$ be the number of  inbound stubs and outbound stubs, respectively, that have been removed from node $v_1$ during the erasing procedure. Then,
\begin{align*}
E\left[ \left| 1(D_1^{(e)} = j) - 1(D_1 = j) \right| \right] &\leq P\left( E^- \geq 1 \right) \quad \text{and} \\
E \left[ \left| 1(N_1^{(e)} = i) - 1(N_1 = i) \right| \right] &\leq P\left( E^+ \geq 1 \right).
\end{align*}
By Lemma \ref{L.SurvivingStubs}, 
$$\lim_{n \to \infty} P(E^- \geq 1) = 0 \qquad \text{and} \qquad \lim_{n \to \infty} P(E^+ \geq 1) = 0,$$
which completes the proof of part (a). 

For part (b) we only show the proof for $\widehat{g_k}^{(n)}$, since the proof for $\widehat{f_k}^{(n)}$ is symmetrical. Fix $\epsilon > 0$ and use the triangle inequality and the union bound to obtain
\begin{align*}
P\left( \left| \widehat{g_k}(k) - g_k \right| > \epsilon \right) &\leq P\left( \left| \widehat{g_k}(k) - \frac{1}{n} \sum_{i=1}^n 1(D_i = k) \right| > \epsilon/2 \right) + P\left(  \left| \frac{1}{n} \sum_{i=1}^n 1(D_i = k) - g_k \right| > \epsilon/2 \right).
\end{align*}
From the proof of Proposition \ref{P.RepeatedDistr}, we know that the second probability converges to zero as $n \to \infty$, and for the first one use Markov's inequality to obtain
\begin{align*}
P\left( \left| \widehat{g_k}(k) - \frac{1}{n} \sum_{i=1}^n 1(D_i = k) \right| > \epsilon/2 \right) &\leq P\left( \frac{1}{n} \sum_{i=1}^n \left| 1(D_i^{(e)} = k) - 1(D_i = k) \right| > \epsilon/2 \right) \\
&\leq \frac{2}{\epsilon} E\left[ \left| 1(D_1^{(e)} = k) - 1(D_1 = k) \right| \right] \\
&\leq \frac{2}{\epsilon} P(E^- \geq 1) \to 0,
\end{align*}
as $n \to \infty$, by Lemma \ref{L.SurvivingStubs}. 
\end{proof}

\bibliographystyle{plain}

\end{document}